\numberwithin{equation}{section}
\newcommand{\Fc}{\mathcal{F}}
\newcommand{\Pc}{\mathcal{P}}
\newcommand{\E}{\mathbb{E}}
\newcommand{\N}{\mathbb{N}}
\newcommand{\Pb}{\mathbb{P}}
\newcommand{\R}{\mathbb{R}}
\newcommand{\Indi}[1]{\mathbbm{1}_{#1}}
\newcounter{dummy} \numberwithin{dummy}{section}
\newtheorem{Theorem}[dummy]{Theorem}
\newtheorem{Lemma}[dummy]{Lemma}
\newtheorem{Example}[dummy]{Example}
\newtheorem{Remark}[dummy]{Remark}
\def\1{{\rm l}\hskip -0.21truecm 1}
\newcommand{\vertiii}[1]{{\left\vert\kern-0.25ex\left\vert\kern-0.25ex\left\vert #1 
    \right\vert\kern-0.25ex\right\vert\kern-0.25ex\right\vert}}
\def\R{\mathbb{R}}
\def\NN{\mathbb{N}}
\def\PP{\mathbb{P}}
\def\EE{\mathbb{E}}
\def\Var{\mathbb{V}\mathrm{ar}}
\def\al{\alpha}
\def\be{\beta}
\def\ga{\gamma}
\def\de{\delta}
\def\ep{\varepsilon}
\def\wt{\widetilde}
\def\dtv{d_\mathrm{TV}}
\def\dh2l{\mathbf{d}_{\mathbb{H}_{2\ell}}}
\def\d2{\mathbf{d}_2}
\def\cP{\mathcal{P}}
\begin{document}

\title[A generalized Kubilius-Barban-Vinogradov bound for prime multiplicities]{A generalized Kubilius-Barban-Vinogradov  bound for prime multiplicities}
\author{Louis H.Y. Chen, Arturo Jaramillo, Xiaochuan Yang}
\address{Louis H. Y. Chen: Department of Mathematics, National University of Singapore, Block S17, 10 Lower Kent Ridge Road, Singapore 119076.}
\email{matchyl@nus.edu.sg}
\address{Arturo Jaramillo: Centro de Investigación en Matem\'aticas, Jalisco S/N, Col. Valenciana 36023 Guanajuato, Gto.}
\email{jagil@cimat.mx}
\address{Xiaochuan Yang: Department of Mathematics, Brunel University London, Uxbridge UB83PN, United Kingdom}
\email{xiaochuan.j.yang@gmail.com}

\date{November 14, 2021}

\begin{abstract}
We present an assessment of the distance in total variation of \textit{arbitrary} collection of prime factor multiplicities of a random number in $[n]=\{1,\dots, n\}$ and a collection of independent geometric random variables. More precisely, we impose mild conditions on the probability law of the random sample and the aforementioned collection of prime multiplicities, for which a fast decaying bound on the distance towards a tuple of geometric variables holds. Our results generalize and complement those from Kubilius et al. \cite{Kub,BaVi,Ten} which consider the particular case of uniform samples in $[n]$ and collection of ``small primes''. As applications, we show a generalized version of the celebrated Erd\"os Kac theorem for not necessarily uniform samples of numbers.
\end{abstract}
\maketitle

\section{Introduction}

\subsection{Overview}
Let $\Pc$ denote the set of prime numbers. 
For a given $n\in\N=\{1,2,...\}$, we consider a random variable $J_{n}$ supported on $[n]$. 
The goal of this paper is to study asymptotic properties (as $n$ tends to infinity) of the $p$-adic valuations of $J_n$, 
denoted by $\{{v^n_p}\ ;\ p\in\Pc\}$ and characterized by the identity
\begin{align*}
J_{n}
  &=\prod_{p\in\Pc }p^{{v^n_p}}.
\end{align*} More precisely, we will determine general conditions over $J_n$ that will allow us to approximate the law of a random vector of the form 
$\pmb{v}^n=({v^n_p}\ ;\ p\in\Gamma_n)$, where $\Gamma_n$ is a finite subset of $\Pc$, whose cardinality satisfies suitable 
growth conditions. As applications of our results, we will prove a version of Erd\"os-Kac theorem, valid for non-necessarily uniform samples, as well as a Poisson point process approximation for the configuration of non-trivial multiplicities of small primes (see Section \ref{sec:applications} for details). Our approach relies on the use of Bonferroni inequalities and is completely probabilistic; although the the applications will 
typically make use of elementary results from number theory, such as Merten's formula. Thoughout the paper, $[x]$ denotes the set $\{1,...,\lfloor x\rfloor \}$ for any $x>0$ and $\{a|b\}$ denotes the event that $a$ divides $b$ for $a,b\in\NN$. We write $v_p=v^n_p$ for simplicity.\\

\noindent \textit{Motivations and the uniform distribution case}\\
Since the influential manuscript \cite{Kub} by Kubilius in 1962, the use of the random vector $\pmb{v}^n$ 
as a tool for studying divisibility properties of $J_n$ has gained particular traction in probabilistic number theory, 
as these objects naturally emerge in the study of 
arithmetic additive functions of uniform samples. 
Our manuscript takes \cite{Kub} as starting point. There it was proved that in the particular case where 
$J_n$ has uniform distribution over $[n]$ and $\Gamma_{n}=\Pc\cap[n^{1/\be_{n}}]$ 
for a sequence $\be_n>0$ converging to infinity, there exist constants $C,\delta>0$ such that 

\begin{align}\label{eq:Kubiliusorig}
d_{TV}(\pmb{v}^n,{\pmb{g}}^n)
  &\leq Ce^{-\delta \be_n},
\end{align}
where ${\pmb{g}}^n=(g_p\ ;\ p\in\Gamma_n)$ is a random vector whose entries are independent geometric random variables with 
\begin{align*}
\Pb[g_p=k]
  &=p^{-k}(1-p^{-1}),
\end{align*}
for $k\in\N_0:=\mathbb{N}\cup\{0\}$. The aforementioned bound for $d_{TV}(\pmb{v}^n,{\pmb{g}}^n)$, combined with elementary 
probabilistic tools, leads to very powerful results in probabilistic number theory. To exemplify this, 
we would like to mention that \eqref{eq:Kubiliusorig} can be used to obtain a quantitative assessment of the rate of convergence 
in the celebrated Erd\"os-Kac theorem, which establishes the asymptotic normality for the number of 
prime divisors of a uniform sample of $[n]$. The precise statement of Erd\"os-Kac theorem,
as well as its link to the inequality \eqref{eq:Kubiliusorig} and some further generalizations 
will be presented to Section \ref{sec:applications}.\\

\noindent Since their publication, the results from \cite{Kub}
have been extended and generalized in several directions. Next we briefly mention some of them. In Barban et.al. \cite{BaVi} (see also 
\cite[Chapter~3]{Ell}), the 
bound \eqref{eq:Kubiliusorig} was improved to 
\begin{align}\label{eq:barban}
d_{TV}(\pmb{v}^n,{\pmb{g}}^n)
  &\leq C(e^{-\frac{1}{8}\beta_n\log(\beta_n)} + n^{-\frac{1}{15}}),
\end{align}
for some (possibly different) constant $C>0$. In the subsequent papers \cite{Ell} by Elliot and \cite{ArrBa} by Arratia et al.  it was proved that if 
the left hand side of \eqref{eq:Kubiliusorig} converges to zero, then necessarily $\lim_{n}\beta_n=\infty$. We would also like to refer 
the reader to \cite{ArrTav} and \cite{ArrSt} where an analysis of the case of the case where $\beta_{n}=u$ for all $n\in\N$ was carried. In 
such instance it was shown that 
\begin{align}\label{eq:limitisH}
\lim_{n}d_{TV}(\pmb{v}^n,{\pmb{g}}^n)
  =H(u),
\end{align}
where $H$ is defined by 
\begin{align*}
H(u)
  &:=\frac{1}{2}\int_{\R}|\mathcal{B}(v)-e^{-\gamma}|\varrho(u-v)dv+\frac{1}{2}\varrho(u),
\end{align*}
and $\mathcal{B}, \varrho, \gamma$ denote the Buschstav's function, Dickman's function and Euler's constant respectively. 
Finally, we would like to mention the paper \cite{Ten} by Tenenbaum, where it was proved that for every $\varepsilon>0$, 
there exists a constant $C_{\varepsilon}>0$ such that
\begin{align*}
d_{TV}(\pmb{v}^n,{\pmb{g}}^n)
  &\leq C_{\varepsilon}(\varrho(\beta_n)2^{(1+\varepsilon)\beta_n}+n^{-1+\varepsilon}).
\end{align*}
It was proved as well in \cite{Ten} that in the particular case where
$\exp\{(\log\log(n))^{2/5+\varepsilon}\}\leq \beta_n\leq n$, the exact rate
\begin{align}\label{eq:Tenenbaum22}
|d_{TV}(\pmb{v}^n,{\pmb{g}}^n)
-H(\beta_{n})|= o(H(\be_n)),
\end{align}
holds, and for every $\varepsilon>0$,
\begin{align}\label{eq:Tenenbaum2}
d_{TV}(\pmb{v}^n,{\pmb{g}}^n)
  &\leq C_{\varepsilon}(e^{-\be_n \log(\beta_n)}+n^{-1+\varepsilon}).
\end{align}

\noindent 
The above results give a very complete picture of the behavior of $d_{TV}(\pmb{v}^n,{\pmb{g}}^n)$, for the case where 
$J_n$ is uniform in $[n]$ and the primes under consideration are all small, in the sense that $\Gamma_n=\Pc\cap[n^{1/{\be_n}}]$.  
There has however been only little investigation into the problem of giving explicit rates of $d_{TV}(\pmb{v}^n,{\pmb{g}}^n)$ 
for more general choices for $\Gamma_n$ and $J_n$. In this paper we give a partial answer to this, as we address the following questions
\begin{enumerate}
\item[-] Up to what extent, the above results remain valid for different choices of $\Gamma_n$?
\item[-] How much flexibility do we have for choosing the distribution of $J_n$? 
\end{enumerate}
We would like to emphasize that the cases where $\Gamma_n$ contains large primes are of special interest, as the 
complementary case is nearly completely described by the relation \eqref{eq:Tenenbaum22}.\\

\noindent \textit{Some heuristic considerations}\\
\noindent In order to give an exploratory description of the nature of bounds of the type \eqref{eq:Kubiliusorig}, we introduce some suitable 
notation. For any ${D}\subset \Gamma_n$ and $m=(m_{p}\ ;\ p\in{D})\in \NN^{{D}}$, set $|m| := \sum_{p\in{D}} m_p$, $|{D}|:=\sharp{D}$, 
$m+1:=(m_{p}+1\ ;\ p\in{D})$ and
\begin{align*}
p_{D} = \prod_{p\in {D}} p, \quad \quad \quad \quad  p_{D}^m = \prod_{p\in{D}} p^{m_p}
\end{align*}
with the convention that $p_{D}=p_{D}^m=1$ if ${D}=\emptyset$. Define as well the events 
\begin{align*}
{A}({D}, m) 
  &= \{{v_p}=m_p\ \text{ for all } \  p\in{D}\ \text{ and } \   \xi_q=0\ \text{ for all } \   q\in\Gamma_n\setminus {D}\}, \\
\tilde{{A}}({D}, m)  
  &= \{g_p=m_p\ \text{ for all } \   p\in{D}\ \text{ and } \   {g_q}=0\ \text{ for all } \   q\in\Gamma_n\setminus {D}\}.
\end{align*}
This way, we can write
\begin{align}\label{eq:forTV}
d_{TV}(\pmb{v}^n,{\pmb{g}}^n)
=\frac{1}{2}\sum_{{D}\subset\Gamma_n} \sum_{m\in\NN^{{D}}} | \PP[ {A}({D}, m) ] -  \PP[ \tilde{{A}}({D}, m) ] |.
\end{align}
 Formula \eqref{eq:forTV} reduces the problem to estimating 
$\PP[ {A}({D}, m) ]$  and $ \PP[ \tilde{{A}}({D}, m) ] $. One way of doing this, is 
decomposing the events ${A}({D}, m)$ and $\tilde{{A}}({D}, m) $ as unions and intersections of ``elemental events'', 
having the property that their probabilities are easy to approximate. Throughout this paper, the elemental events 
that will serve for approximating $\PP[ {A}({D}, m) ]$ will consist of the elements of the $\pi$-system
\begin{align*}
\mathfrak{E}
  &:=\{\{{v_{p_1}}\geq m_1,\dots, {v_{p_r}}\geq m_{r}\} ;\  p_1,\dots, p_r\in\Pc \text{ and } m_1,\dots, m_r\in{\N}\}\\
	&=\{\{p_1^{m_1}\cdots p_{r}^{m_{r}}| J_n\} ;\  p_1,\dots, p_r\in\Pc \text{ and } m_1,\dots, m_r\in{\N}\},
\end{align*}
while those used to approximate $\PP[ \tilde{A}({D}, m) ]$ will consist on the elements of the $\pi$-system
\begin{align*}
\tilde{\mathfrak{E}}
  &:=\{\{{g_{p_1}}\geq m_1,\dots, g_{p_r}\geq m_{r}\} ;\  p_1,\dots, p_r\in\Pc \text{ and } m_1,\dots, m_r\in{\N}\}.
\end{align*}
This choice is justified by the fact that for every $p_1,\dots, p_r\in\Pc$ satsifying $p_i\neq p_j$ and 
$m_1,\dots, m_{r}\in{\N}$, 
\begin{align}\label{eq:probpisys}
\Pb[{g_{p_1}}\geq m_1,\dots, g_{p_r}\geq m_{r}]
  &=\frac{1}{d},
\end{align}
where $d:=\prod_{i=1}^rp_i^{m_i}.$ In order for our heuristic to be accurate, we are required to impose a condition that 
guarantees that when the variables ${g_p}$ appearing in \eqref{eq:probpisys} are replaced by 
$v_{p}$, the associated probability remains approximately equal to $\frac{1}{d}$. Motivated by this, we will assume that 
$J_{n}$ and $\{g_p\ ;\ p\in\Pc\}$ are defined in a common probability space $(\Omega,\Fc,\Pb)$ and the following 
hypothesis holds:\\

\noindent $(\mathbf{H}_t)$ There exist finite constants $\kappa\geq 1, t>0$ independent of $n$, such that $J_n$ is supported in $[0, n]$ and 
for every $a\in\N$,
\begin{align*}
|\Pb[a\ |\ J_{n}]-\frac{1}{a}| \leq\frac{\kappa}{n^t}  \quad \mbox{ and } \quad \PP[a\ | \ J_n] \le \frac{1+\kappa}{a}.
\end{align*}
Despite the fact that condition $(\mathbf{H}_t)$ imposes a mildly rigid condition over the law of $J_{n}$, it does include the following rich family of probability laws as particular instances.
\begin{Example} 
\label{ex}
Consider the truncated ``Pareto type'' distribution
\begin{align*}
\pi_{n,s}(k) =  \frac{1}{Z_{n,s}} k^{-s}  \quad k\in [n], \mbox{ and }  s\in [0,1), 
\end{align*}
where  $Z_{n,s}= \sum_{k\in[n]}k^{-s}$. We claim that $J_n\sim \pi_{n,s}$ satisfies $(\mathbf{H}_{1-s})$ with $\kappa=3$, for all $n$ large. Indeed, we have
\begin{align}\label{e:div_pareto}
|\PP[a|J_n] - a^{-1}| = \frac{1}{Z_{n,s}}   \Big| \sum_{k=1}^{\lfloor n/a\rfloor} (ak)^{-s} -  a^{-1} \sum_{k=1}^n k^{-s} \Big|
\end{align}
Notice that for any $a\in[n]$,
 \begin{align*}
 \sum_{k=1}^{\lfloor n/a\rfloor} (ak)^{-s} -  a^{-1} \sum_{k=1}^n k^{-s} \le \int_0^{n/a} (ax)^{-s} dx - a^{-1} \int_1^{n+1} x^{-s} dx \le a^{-1} \int_0^1 x^{-s}dx\le (1-s)^{-1}.
 \end{align*}
Similarly, for any $a\in [n]$,
 \begin{align*}
 a^{-1} \sum_{k=1}^n k^{-s} -   \sum_{k=1}^{\lfloor n/a\rfloor} (ak)^{-s}&\le a^{-1} \int_0^{n-1} x^{-s} ds -  \int_1^{n/a} (ax)^{-s} dx  \le a^{-1} \int_0^a x^{-s} dx \le 1+ (1-s)^{-1}.
 \end{align*}
It remains to bound from below $Z_{n,s}$. We have
\begin{align*}
Z_{n,s}\ge \int_1^{n+1} x^{-s} ds = (1-s)^{-1} (n^{1-s}-1)
\end{align*}
Plugging these estimates in \eqref{e:div_pareto} proves that the first bound of $(\mathbf{H}_{1-s})$ holds.  The other bound follows from analogous integral approximation argument.
 \end{Example}
 
\begin{Remark}
Distributions considered in the previous example are not asymptotically uniform in the sense that their total variation distance as $n\to\infty$ does not converge to 0. To verify this, we simply compute the sum
\begin{align*}
\sum_{k\in[n/M]} \Big| \frac{k^{-s}}{Z_{n,s}}  - \frac{1}{n} \Big|, \quad M>1.
\end{align*} 
For any $s\in[0,1)$, by choosing $M$ large enough, we see that the summands are bounded from below by $c/n$, where $c>0$ depends only on $s$ and $M$. This gives a lower bound $c/M>0$ for the total variation distance between the truncated Pareto distributions and the uniform distribution over $[n]$.  
Therefore, the conclusions of our main result cannot be derived from those in the literature for a uniform distribution in $[n]$. 
\end{Remark}

\begin{Example}\label{ex2}
Let $\upsilon:[0,\infty)\rightarrow\R_{+}$ be a continuously differentiable function that is monotone over an interval of the form $[M,\infty)$, for some $M\in\N $, with
\begin{align*}
\sup_{n>M}n^t\int_{0}^1|\upsilon(sn)-L|ds<\infty,
\end{align*}
for some $L>0$ and $t>0$. If the law of $J_{n}$ is supported in $\{1,\dots, n\}$ and satisfies 
\begin{align*}
\Pb[J_{n}=k]
  &:=c_{n}\upsilon(k),
\end{align*}
for some $k=1,\dots, n$ and $c_{n}>0$, then the variables $J_{n}$ satisfy condition $(\mathbf{H}_t)$ for $n$ sufficiently large. To verify this, observe that for every natural number $a\in\N$,
\begin{align}\label{eq:adividesJnexact}
\Pb[a\ |\ J_{n}]
  &= n^tc_n\left(\frac{1}{n^t}\sum_{j=1}^{\lfloor n/a\rfloor}\upsilon(aj)\right).
\end{align}
By elementary algebraic manipulations, 
\begin{align*}
\frac{1}{n^t}\sum_{j=1}^{\lfloor n/a\rfloor}\upsilon(aj)
  &=O(1/n^t)+\frac{1}{n^t}\sum_{j=M}^{\lfloor n/a\rfloor}\upsilon(aj)
	=O(1/n^t)+\frac{1}{n}\int_{M}^{ n/a}\upsilon(as)ds=O(1/n^t)+L/a,
\end{align*}
where $O(1/n^t)$ denote error functions bounded in absolute value by a constant multiple of $1/n^t$, independent of $a$. By choosing $a=1$, we get that $n^tc_{n}=L+O(1/n^t)$. Condition $(\mathbf{H}_t)$ then follows from \eqref{eq:adividesJnexact}. Some particular instances in which the above conditions hold are the case where $J_{n}$ has uniform distribution over $\{1,\dots, n\}$ and more generally, the case where there exist a non-increasing function $\theta:\R_{+}\rightarrow\R_{+}$, with
$$\sup_{n\geq 1}k^t\theta(k)<\infty,$$
as well as constants $\varepsilon>0$ and $c_{n}>0$ such that $\Pb[J_{n}=k]=c_{\alpha,n}(\varepsilon+\theta(k))$ for all $k=1,\dots, n$ and $\Pb[J_{n}=k]=0$ otherwise.\\
\end{Example}

\begin{Example} 
A generalization of the above two examples gives rise the following large family of probability distributions satisfying  $(\mathbf{H}_t)$.  Let $\upsilon:[0,\infty)\rightarrow\R_{+}$ be a continuously differentiable function that is monotone over an interval of the form $[M,\infty)$, for some $M\in\N $, with
\begin{align*}
\sup_{n>M}\frac{1}{c_n}\int_{0}^1 |\upsilon(sn)|ds<\infty,
\end{align*}
for some $L>0$ and $t\in(0,1)$, where 
\begin{align*}
c_n
  :=\left(\sum_{k=1}^n\upsilon(k)\right)^{-1}.
\end{align*}
If the law of $J_{n}$ is supported in $\{1,\dots, n\}$, $c_n$ has a decay of the order $n^{-t}$ and 
\begin{align*}
\Pb[J_{n}=k]
  &:=c_{n}\upsilon(k),
\end{align*}
for some $k=1,\dots, n$ and $c_{n}>0$, then the variables $J_{n}$ satisfy condition $(\mathbf{H}_t)$ for $n$ sufficiently large. The proof of this claim is identical to that of Example \ref{ex2}, with the exception that $n^t$ should be replaced by $c_n^{-1}$ and $L$ should be replaced by zero.
\end{Example}

\noindent Notice that 
\begin{align*}
{A}({D}, m)
  &=\{{v_p}\geq m_p\ ;\ p\in{D}\}\backslash\bigcup_{p\in\Pc}\{{v_p}\geq m_{p}+1\ ;\ p\in{D}\}\cup\{\xi_q\geq 1\ ;\ q\in {D}^c\}\\
\tilde{{A}}({D}, m)
  &=\{g_p\geq m_p\ ;\ p\in{D}\}\backslash\bigcup_{p\in\Pc}\{g_p\geq m_{p}+1\ ;\ p\in{D}\}
	\cup\{{g_q}\geq 1\ ;\ q\in{D}^c\},
\end{align*}
where ${D}^c$ denotes the complement relative to $\Gamma_n$, namely, 
${D}^c:=\Gamma_n\backslash{D}$. Consequently, by an elementary application of the inclusion-exclusion principle, 
\begin{align*}
\PP[{A}({D}, m)]
  &= \sum_{ I\subset \Gamma_n} (-1)^{|I|} \Pb[\{p_{{D}}^m|J_n\}\cap \{p_{{D}\cap I}^{m+1} p_{{D}^c\cap I}|J_n\}]\\
  &= \sum_{ I\subset \Gamma_n} (-1)^{|I|} 
	\Pb[p_{{D}\cap I^c}^mp_{{D}\cap I}^{m+1} p_{{D}^c\cap I}|J_n]
	= \sum_{ I\subset \Gamma_n} (-1)^{|I|} \Pb[p_{{D} }^m p_{I}|J_n]
\end{align*}
for any ${D}\subset \Gamma_n$ and $m\in\NN^{D}$. Similarly, 
\begin{align*}
\PP[\tilde{{A}}({D}, m)]
  &= \sum_{I\subset \Gamma_n}  \frac{ (-1)^{| I |}}{p_{{D} }^{m } p_{I}}.
\end{align*}
Thus, by using the hypothesis $(\mathbf{H}_t)$, we obtain the bound  
\begin{align*}
d_{TV}(\pmb{v}^n,{\pmb{g}}^n)
&\leq\frac{1}{2}\sum_{{D}\subset\Gamma_n} \sum_{m\in\NN_{+}^{{D}}}\sum_{ I\subset \Gamma_n} 
(\Indi{\{p_{{D} }^{m } p_{I}\leq n\}}\frac{1}{n}+\Indi{\{p_{{D} }^{m } p_{I}>n\}}
\frac{1}{p_{{D} }^{m } p_{I}}).
\end{align*}	
By using the geometric sum formula for the second term and the fact that there are at most $\frac{1}{2}\log(n)$ values for $m$ for which 
$p_{{D} }^{m } p_{I}\leq n$, we deduce that there exists a universal constant $C>0$, independent of $\Gamma_n$ or $n$, such that
\begin{align}\label{eq:roughbTV}
d_{TV}(\pmb{v}^n,{\pmb{g}}^n)
&\leq C\sum_{{D}\subset\Gamma_n}\sum_{ I\subset \Gamma_n}\frac{\log(n)}{n}
\leq  C4^{ |\Gamma_n|}
\frac{\log(n)}{n}.
\end{align}	
In particular, when $\sup_n|\Gamma_n|<\infty$, one deduces that $d_{TV}(\pmb{v}^n,{\pmb{g}}^n)$ is of the order 
$\frac{\log(n)}{n}$ . Naturally, one wonders if in the case where $|\Gamma_{n}|$ converges to infinity, 
the above argument leads to a ``good'' bound for $d_{TV}(\pmb{v}^n,{\pmb{g}}^n)$.
Unfortunately, the right hand side of \eqref{eq:roughbTV} diverges if $|\Gamma_n|$ is asymptotically larger than $\frac{\log(n)}{\log(4)}$, 
which restricts significantly the possible choices of $\Gamma_n$.\\
 
\noindent In this paper, we make suitable adjustments to the argument above, so that we obtain a bound that converges to zero even when 
$|\Gamma_n|$ is polynomial of any degree in $\log(n)$, see Remark \ref{r:sizeGamma} for more details. The main idea
consists of replacing the use of the inclusion-exclusion principle by Bonferonni-type bounds. 
This, combined with a careful combinatorial analysis of the resulting terms, 
leads to a near to optimal bound for $d_{TV}(\pmb{v}^n,{\pmb{g}}^n)$.

\subsection{Statement of the main results}
Recall that $J_n$ is assumed to satisfy the condition $(\mathbf{H}_t)$. To state the main theorem, we introduce the quantities
\begin{align*}
\tau_n:=\sum_{p\in\Gamma_n} \frac{1}{p}  \quad\quad \mbox{ and } \quad\quad \rho_n:= \frac{\log n}{\log |\Gamma_n|}.
\end{align*}

\begin{Theorem}\label{t}
Suppose that $\lim_{n\to\infty} \rho_n =\infty$ and that there exists $\varepsilon>0$, such that 
$|\Gamma_n|\leq n^{ \tau_{n}^{-1-\varepsilon} }$ for all $n\in\NN$. Suppose that $(\mathbf{H}_t)$ holds with $t>0$. Then for $n$ sufficiently large, we have 
\begin{align}\label{eq:GenKub}
\dtv(\pmb{v}^n,{\pmb{g}}^n)
  &\leq (7+4\kappa) \exp\Big( - c \min\big( \rho_n \log\rho_n, \log(n) \big) \Big)
\end{align}
with $c= \frac{t(1\wedge\ep)}{12(1+\ep)}$.
\end{Theorem}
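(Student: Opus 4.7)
My plan is to refine the heuristic argument in the introduction by replacing the exact inclusion--exclusion expansion leading to \eqref{eq:roughbTV} with a \emph{truncated} Bonferroni expansion at some level $K=K(n)$ to be optimized. Starting from \eqref{eq:forTV} and the identities
\begin{align*}
\Pb[A(D,m)] = \sum_{I\subset \Gamma_n}(-1)^{|I|}\Pb[p_D^m p_I\,|\,J_n], \qquad \Pb[\tilde A(D,m)] = \sum_{I\subset\Gamma_n}\frac{(-1)^{|I|}}{p_D^m p_I}
\end{align*}
already derived, the standard Bonferroni inequalities give
\begin{align*}
\Big|\Pb[A(D,m)] - \sum_{|I|\le K}(-1)^{|I|}\Pb[p_D^m p_I\,|\,J_n]\Big| \le \sum_{|I|=K+1}\Pb[p_D^m p_I\,|\,J_n]
\end{align*}
and analogously for $\tilde A$. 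Applying the triangle inequality to \eqref{eq:forTV} and summing over $(D,m)$ decomposes $\dtv(\pmb{v}^n,\pmb{g}^n)$ into a \emph{matched contribution} (running over $|I|\le K$) and two \emph{Bonferroni residuals} (at level $|I|=K+1$).

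For the matched contribution, $(\mathbf{H}_t)$ supplies two complementary bounds on each summand $|\Pb[p_D^m p_I\,|\,J_n] - 1/(p_D^m p_I)|$: the uniform bound $\kappa/n^t$ (valid for every divisor), and the triangle-inequality bound $(2+\kappa)/(p_D^m p_I)$ coming from the second half of $(\mathbf{H}_t)$. The Bonferroni residuals are directly controlled by $(1+\kappa)/(p_D^m p_I)$. Every contribution therefore reduces to arithmetic sums of the form
\begin{align*}
T_k = \sum_{D\subset\Gamma_n}\sum_{m\in\N^D}\sum_{|I|=k}\frac{1}{p_D^m p_I},
\end{align*}
which factor multiplicatively via $\sum_{m_p\ge 1}p^{-m_p}=1/(p-1)$ together with $\sum_{|I|=k}1/p_I\le \tau_n^k/k!$. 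Combined with Mertens-type estimates this yields $T_k\le e^{2\tau_n}\tau_n^k/k!$, which by Stirling decays super-exponentially as soon as $K\gg\tau_n$.

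The remaining task is to handle the matched ``in-range'' contribution ($p_D^m p_I\le n$, weighted by $\kappa/n^t$) and to choose $K$ optimally. The constraint $p_D^m p_I\le n$ forces $|m|+|I|\le \log_2 n$; bounding the in-range contribution either by counting admissible tuples or by inserting a weight $(p_D^m p_I)^{-\alpha}$ for some $\alpha\in(0,t)$ (so as to reuse the multiplicative structure of $T_k$) produces a bound of order $n^{-t+o(1)}$, which is the source of the $\exp(-c\log n)$ regime. Choosing $K=\lceil c_0\rho_n\rceil$ for a small $c_0$ proportional to $t$ and rewriting the hypothesis $|\Gamma_n|\le n^{\tau_n^{-1-\ep}}$ as $\tau_n\le \rho_n^{1/(1+\ep)}$, the Bonferroni residual $(e\tau_n/K)^K$ upgrades to $\exp\!\big(-c_0\,\tfrac{\ep}{1+\ep}\,\rho_n\log\rho_n\big)$, which is the source of the $\exp(-c\rho_n\log\rho_n)$ regime with $c=t(1\wedge\ep)/(12(1+\ep))$ as stated in \eqref{eq:GenKub}.

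The main obstacle I anticipate is the combinatorial bookkeeping needed to separate tuples $(D,m,I)$ with $p_D^m p_I\le n$ from those with $p_D^m p_I>n$ while tracking the joint dependence of each sum on $K$, on $\tau_n$, and on $|\Gamma_n|$. The $\min(\rho_n\log\rho_n,\log n)$ structure in the exponent of \eqref{eq:GenKub} reflects the fact that the in-range matched sum and the combined out-of-range plus residual sum are governed by two independent mechanisms --- the hypothesis $(\mathbf{H}_t)$ on one side, and the multiplicative structure of $T_k$ together with $|\Gamma_n|\le n^{\tau_n^{-1-\ep}}$ on the other --- and balancing them under a single choice of $K$ is the technical heart of the argument.
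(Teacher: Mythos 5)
Your plan applies a truncated Bonferroni expansion globally over \emph{all} pairs $(D,m)$, and this is where it breaks. The paper does not do that: it first cuts off the cases $|D| > \alpha_n$ (many distinct prime divisors) and $|m| > \beta_n$ (overall high multiplicity) by direct tail estimates, using the crude bound $|\PP[A]-\PP[\tilde A]|\le \PP[A]+\PP[\tilde A]$ combined with a Chernoff bound and a Markov/exponential-moment bound on the geometrics (Lemmas \ref{l:many} and \ref{l:high}); only on the surviving region $\{|D|\le\alpha_n,\,|m|\le\beta_n\}$ does it apply Bonferroni (Lemma \ref{l:bonf}). That preliminary truncation is what makes the matched contribution tractable: restricted to $|D|\le\alpha_n$ and $|m|\le\beta_n$, the number of admissible $(D,m)$ is at most $1+\sum_{j\le\alpha_n}(e^2\beta_n|\Gamma_n|/j^2)^j$, a quantity the paper controls by $n^{O(\delta)}$, so that the pointwise bound $\kappa/n^t$ times $|\Gamma_n|^{\gamma+1}$ times the $(D,m)$-count is $O(n^{-(t-3\delta)})$.

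Without that prior cut, your matched contribution has no usable bound. Using $\kappa/n^t$ for the in-range terms requires counting tuples $(D,m,I)$ with $p_D^m p_I\le n$: the pairs $(D,m)$ with $p_D^m\le n$ are in bijection with $\Gamma_n$-smooth integers $\le n$, so the count is $\Psi(n,\Gamma_n)$, which the hypotheses do not force to be $o(n^t)$; observing that $|m|+|I|\le\log_2 n$ bounds the \emph{total multiplicity}, not the number of ways to distribute it among $|\Gamma_n|$ primes. Using the reciprocal bound $(2+\kappa)/(p_D^m p_I)$ instead is worse: summing over \emph{all} $(D,m)$ and $|I|\le K$ gives $\sum_{k\le K}T_k\asymp e^{3\tau_n}$, which diverges since $\tau_n\to\infty$. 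Your weight trick $(p_D^m p_I)^{-\alpha}$ with $\alpha\in(0,t)$ also fails: for $\alpha<1$ the factor $\sum_{D,m}(p_D^m)^{-\alpha}=\prod_{p\in\Gamma_n}\bigl(1+(p^\alpha-1)^{-1}\bigr)$ grows like $\exp\bigl(\sum_{p\in\Gamma_n}p^{-\alpha}\bigr)$, which is super-polynomial in $n$ for the sets $\Gamma_n$ the theorem allows. In short, the missing idea is that the cases of many divisors or high total multiplicity must be removed \emph{before} Bonferroni is invoked, by a separate argument that exploits the rapid decay of $\PP[A(D,m)]$ and $\PP[\tilde A(D,m)]$ themselves rather than cancellations between them; the Bonferroni step is only safe on the small/moderate region.
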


\begin{Remark}
In the case where $\Gamma_{n}=\Pc\cap[n^{\frac{1}{\beta_n}}]$ for $\beta_n>1$ converging to infinity, 
we have that $\lim_{n}n^{-\frac{1}{\beta_n}}\log(n^{\frac{1}{\beta_n}})|\Gamma_n|=1$ due to the prime 
number theorem. From here it follows that $\lim_{n}\frac{\rho_n}{\beta_n}=1$. Consequently, up to a change in the constant 
$c$, Theorem \ref{t} improves Barban's bound \eqref{eq:barban} when $\varepsilon$ is large enough and
\begin{align}\label{eq:asumpbeta}
\beta_n\log(\beta_n)\leq \log(n),
\end{align}
for $n$ sufficiently large. 
 
\end{Remark}

\begin{Remark}
If $\rho_n$ is uniformly bounded over $n$, then $|\Gamma_n|\ge n^{a}$ for some fixed $a\in(0,1)$ independent of $n$. 
In this case, by \eqref{eq:limitisH},
%
in order for a bound for the left hand side of \eqref{eq:GenKub} to converge to zero at the time that it generalizes 
the bounds presented by Kubilius 
and Barban bounds, we have to restrict ourselves to the regime $\lim_{n\to\infty} \rho_n =\infty$. 
This justifies the appearance of the condition $\lim_{n\to\infty} \rho_n =\infty$ as a hypothesis in Theorem \ref{t}. 
However, a bound for case $\sup_{n}\rho_n<\infty$ can be obtained by means of Equation \eqref{eq:roughbTV}.
\end{Remark}

\begin{Remark}\label{r:sizeGamma}
By Mertens' formula (see \cite[page 14]{Ten}), we have that for $n$ sufficiently large, $\tau_n\le \log\log(n)+1$. Consequently, 
for any $K>0$, we have that
\begin{align*}
n^{\tau_n^{-1-\ep}} \ge e^{ \frac{\log(n)}{(1+\log\log(n))^{1+\ep}} } \ge [\log(n)]^K,
\end{align*}
provided that $n$ is sufficiently large. Thus, our condition on the cardinality of $\Gamma_n$ is mild, as one 
only requires the condition $|\Gamma_n|\leq n^{ \frac{1}{(1+\log\log(n))^{1+\ep}} }$ to be satisfied. 
\end{Remark}

\begin{Remark}
By considering $\Gamma_n$ of small cardinality, we observe that one cannot surpass the barrier $\log(n)$ in the exponent. 
For instance, if
\begin{align*}
|\Gamma_n| \le \log\log(n),
\end{align*}
then we have $\rho_n\log(\rho_n) \ge \log(n)\log\log(n)$ for $n$ sufficiently large, and any estimate of the type 
$c_1\exp(- c_2 \rho_n\log(\rho_n) )$ decays faster than $c_1 e^{-c_2\log(n)\log\log(n)}$. 
The former quantity can't bound $\dtv(\pmb{v}^n, {\pmb{g}}^n)$ since for every $p\in\Pc$,
\begin{align*}
\dtv(\pmb{v}^n, {\pmb{g}}^n) &= \frac{1}{2}\sum_{m\in\NN_0^{\Gamma_n}} |\PP [ \pmb{v}^n=m] - \PP[\pmb{g}^n=m]| 
\ge \frac{1}{2} \PP\Big[g_p\ge \frac{\log(n)}{\log(p)} \Big],
\end{align*}
so that 
\begin{align*}
\dtv(\pmb{v}^n, {\pmb{g}}^n) \ge p^{ - 1 - (\log(n)/\log(p))}=\frac{1}{pn}. 
\end{align*}
\end{Remark}

\section{Proof of theorem \ref{t}}
The main idea consists on decomposing the right hand side of
 \eqref{eq:forTV} into three pieces, 
which heuristically correspond to the following instances
\begin{enumerate}
\item[-] The set ${D}\subset\Gamma_n$ of prime divisors  has large cardinality, see Section  \ref{s:large_D}.
\item[-] The set ${D}\subset\Gamma_n$ is small, but the associated multiplicities $m\in\N^{{D}}$ are large, see Section \ref{s:small_D_large_m}.
\item[-] Both the set ${D}\subset\Gamma_n$ and the multiplicities $m\in\N^{{D}}$ are small, see Section \ref{s:small_D_small_m}.
\end{enumerate}
A suitable control for each of these instances will be considered in Lemmas \ref{l:many},\ref{l:high}, \ref{l:bonf}. 

\subsection{Many distinct prime divisors}\label{s:large_D}
First we consider ${D}$ with large cardinality. 
We let the threshold $\al_{n,\delta}$ of the form 
\begin{align}\label{eq:alphadef}
\alpha_{n,\delta} := \delta \rho_n,
\end{align}
where $\delta>0$ is a positive constant independent of $n$. For convenience in the notation, we will avoid specifying the dependence of 
$\alpha_{n,\delta}$ on $\delta$ and simply write $\alpha_n=\alpha_{n,\delta}$.

\begin{Lemma}\label{l:many}  Suppose that the second bound of $(\mathbf{H}_t)$ holds. 
For any $\de>0$ and $n\in\NN$, we have
\begin{align*}
&\sum_{\substack{{D}\subset\Gamma_n\\|{D}|\ge \al_n}}
\sum_{m\in\N^{{D}}} |\PP[{A}({D}, m)]-\PP[\tilde{{A}}({D}, m)] |\\
  &\leq (2+\kappa)\exp\Big( -\frac{\de\ep}{1+\ep} \rho_n \log(\rho_n) + \de(1-\log(\de))\rho_n  \Big).
\end{align*}
\end{Lemma}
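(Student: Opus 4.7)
The plan is to replace $|\PP[A(D,m)]-\PP[\tilde A(D,m)]|$ by the crude upper bound $\PP[A(D,m)]+\PP[\tilde A(D,m)]$ and to evaluate the inner sum over $m\in\N^{D}$ for each fixed $D$. For fixed $D$, the event $\bigcup_{m\in\N^{D}}A(D,m)$ forces every prime of $D$ to divide $J_n$ (dropping the constraints on $q\in\Gamma_n\setminus D$ for an upper bound), so $\sum_{m\in\N^{D}}\PP[A(D,m)] \leq \PP[p_D\,|\,J_n]$, and the second part of hypothesis $(\mathbf{H}_t)$ gives $\PP[p_D\,|\,J_n]\leq (1+\kappa)/p_D$. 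A parallel computation, using the independence of the $g_p$'s and $\PP[g_p\geq 1]=1/p$, yields $\sum_{m\in\N^{D}}\PP[\tilde A(D,m)] \leq 1/p_D$. This reduces the task to bounding $(2+\kappa)\sum_{D\subset\Gamma_n,\,|D|\geq\al_n} 1/p_D$ by the claimed exponential.

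The next step is to control this symmetric-function sum. Grouping by cardinality $k=|D|$ and applying the elementary inequality
\begin{align*}
\sum_{\substack{D\subset\Gamma_n\\ |D|=k}}\frac{1}{p_D} \leq \frac{1}{k!}\Big(\sum_{p\in\Gamma_n}\frac{1}{p}\Big)^{k} = \frac{\tau_n^k}{k!}
\end{align*}
reduces matters to estimating the Poisson-like tail $\sum_{k\geq\al_n}\tau_n^k/k!$. The crucial input from the cardinality hypothesis $|\Gamma_n|\leq n^{\tau_n^{-1-\ep}}$ is, after taking logarithms and comparing to $\rho_n=\log(n)/\log|\Gamma_n|$, the inequality $\tau_n\leq\rho_n^{1/(1+\ep)}$. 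Combined with $\al_n=\de\rho_n$, this gives $\tau_n/\al_n\leq \de^{-1}\rho_n^{-\ep/(1+\ep)}$, which tends to $0$ under the standing assumption $\rho_n\to\infty$. Thus for $n$ large the consecutive-term ratios $\tau_n/(k+1)\leq\tau_n/\al_n$ in the tail are uniformly small, and a geometric comparison yields $\sum_{k\geq\al_n}\tau_n^k/k! \leq 2\,\tau_n^{\al_n}/\al_n!$.

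Finally, Stirling's lower bound $\al_n!\geq(\al_n/e)^{\al_n}$ combined with $\al_n=\de\rho_n$ and $\tau_n\leq\rho_n^{1/(1+\ep)}$ yields
\begin{align*}
\frac{\tau_n^{\al_n}}{\al_n!} \leq \Big(\frac{e\tau_n}{\al_n}\Big)^{\al_n} \leq \Big(\frac{e}{\de\,\rho_n^{\ep/(1+\ep)}}\Big)^{\de\rho_n} = \exp\Big(\de(1-\log\de)\rho_n - \tfrac{\de\ep}{1+\ep}\rho_n\log\rho_n\Big),
\end{align*}
which is the claimed bound. The leading factor of $2$ from the geometric tail is absorbed using a sharper form of Stirling such as $\al_n!\geq\sqrt{2\pi\al_n}(\al_n/e)^{\al_n}$ once $\al_n$ is large enough, and for the small values of $n$ where this is not available the estimate is trivial since the right-hand side of the lemma exceeds $1$.

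The main obstacle, in my view, is the translation of the somewhat opaque hypothesis $|\Gamma_n|\leq n^{\tau_n^{-1-\ep}}$ into the effective bound $\tau_n\leq \rho_n^{1/(1+\ep)}$ that drives the key gain $-\tfrac{\de\ep}{1+\ep}\rho_n\log\rho_n$; everything else is routine exponential tail-bookkeeping.
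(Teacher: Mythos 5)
Your proof is correct and follows essentially the same route as the paper: bound the absolute difference by the sum of probabilities, collapse the sum over $m$ to $\PP[p_D\mid J_n]\le(1+\kappa)/p_D$ (and $1/p_D$ for the independent model), group by $|D|=k$ via $\sum_{|D|=k}1/p_D\le\tau_n^k/k!$, extract $\tau_n\le\rho_n^{1/(1+\ep)}$ from the cardinality hypothesis, and control the Poisson-type tail $\sum_{k\ge\al_n}\tau_n^k/k!$. The only cosmetic difference is that the paper invokes its Chernoff bound (Lemma~\ref{l:chernoff}) to get $\sum_{k\ge\al_n}\tau_n^k/k!\le(e\tau_n/\al_n)^{\al_n}$ in one step, whereas you reach the same expression by a geometric-ratio comparison followed by Stirling — the two are interchangeable here, and your observation that the Chernoff-style bound (and hence the lemma as stated) really only kicks in once $\tau_n<\al_n$ is a small point the paper glosses over as well.
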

\begin{proof}
The sum on the left-hand side is empty if $\al_n>|\Gamma_n|$ in which case the claim is trivial. Suppose that  $\al_n\le |\Gamma_n|$.  We simply bound
$$|\PP[\tilde{{A}}({D}, m)] - \PP[{A}({D}, m)] |\leq \PP[\tilde{{A}}({D}, m)] + \PP[{A}({D}, m)]$$ and estimate the sum of probabilities separately. Notice that 
\begin{align*}
\sum_{\substack{{D}\subset\Gamma_n\\|{D}|\ge \al_n}}
\sum_{m\in\N^{{D}}}\PP[{A}({D}, m)]
  &=\sum_{\substack{{D}\subset\Gamma_n\\|{D}|\ge \al_n}} \PP[{v_p} \ge 1, \forall p\in {D}, \xi_q = 0, \forall q\in \Gamma_n\setminus{D}] \\
  &\le \sum_{\substack{{D}\subset\Gamma_n\\|{D}|\ge \al_n}}\PP[{v_p} \ge 1, \forall p\in {D}]
	= \sum_{\substack{{D}\subset\Gamma_n\\|{D}|\ge \al_n}}
	\Pb[p_D|J_n] 
	\le \sum_{\substack{{D}\subset\Gamma_n\\|{D}|\ge \al_n}} \frac{1+\kappa}{p_{D}},
\end{align*}
where the last inequality follows from hypothesis $(\mathbf{H}_t)$. Observe that the condition 
$|\Gamma_n|\leq n^{\tau_n^{-(1+\varepsilon)}}$ implies that 
$\log(|\Gamma_n|)\leq \frac{\log(n)}{\tau_n^{1+\varepsilon}}$, which leads to the bound 
\begin{align}\label{eq:technical}
\rho_{n}\geq \tau_{n}^{1+\varepsilon}.
\end{align}
 Thus, by using Chernoff's bound 
(see Lemma \ref{l:chernoff}), 
\begin{align}\label{e:p5}
 \sum_{\substack{{D}\subset\Gamma_n\\|{D}|\ge \al_n}}  \frac{1}{p_{D}}
= \sum_{j=\al_n}^{|\Gamma_n|} \frac{1}{j!} \sum_{(p_1,...,p_j)\in (\Gamma_n)^j_{\neq}} \frac{1}{p_1\cdots p_j}
\le \sum_{j=\al_n}^\infty \frac{\tau_n^{j}}{j!}\le  \Big(\frac {e\tau_n}{\al_n}\Big)^{\al_n},
\end{align}
where $(\Gamma_n)^j_{\neq}$ is the set of $j$-tuples of elements in $\Gamma_n$ with distinct coordinates. We observe by \eqref{eq:technical} that 
\begin{align*}
\log\Big(\Big(\frac {e\tau_n}{\al_n}\Big)^{\al_n}\Big)
    &\le \de\rho_n\Big(1+ \frac{1}{1+\ep} \log(\rho_n) - \log(\de) - \log(\rho_n)\Big) \\
    &\le -\frac{\de\ep}{1+\ep} \rho_n \log(\rho_n) + \de(1-\log(\de))\rho_n,
\end{align*} 
yielding the desired estimate for the  sum of $\PP[{A}({D},m)]$. 
The same argument gives an analogous bound for the sum of $\PP[\tilde {{A}}({D},m)]$, ending the proof. 
\end{proof}

\subsection{Fewer prime divisors with  overall high multiplicities}\label{s:small_D_large_m}

Now we consider the case  of fewer prime divisors $|{D}|\le \al_n$. Recall that $|m|\ge |{D}|$ since $m\in \NN^{D}$. 
We are interested in the situation where the prime divisors ${D}$ have overall high multiplicities in the sense that 
$|m|\ge \beta_n$ with $\beta_n$ much larger than $\al_n$.

\begin{Lemma}\label{l:high} Suppose that the second bound of $(\mathbf{H}_t)$ holds. 
Let $\alpha_n$ be given by \eqref{eq:alphadef} and define 
\begin{align}\label{eq:betadef}
\be_n := \frac{2\de}{(1+\ep)\log(1.5)}\rho_n \log(\rho_n).
\end{align}
Then we have for all $n\in\NN$,
\begin{align}\label{eq:sumfewplargem}
&\sum_{\substack{{D}\subset\Gamma_n\\|{D}|\le \al_n}} \sum_{\substack{m\in\N^{{D}}\\|m|\ge \beta_n}}
|\PP[\tilde{{A}}({D}, m)] - \PP[{A}({D}, m)] | \\
&\le (2+\kappa) \exp\Big( -\frac{\de}{1+\ep}\rho_n\log(\rho_n) + 5\log(1.5)\de \rho_n + \log(\de\rho_n) \Big). 
\end{align}
\end{Lemma}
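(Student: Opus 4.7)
The strategy parallels the proof of Lemma \ref{l:many}: bound
$|\PP[A(D,m)]-\PP[\tilde{A}(D,m)]|\le \PP[A(D,m)]+\PP[\tilde{A}(D,m)]$
and estimate each probability by a constant multiple of $1/p_D^m$. Since $A(D,m)\subset\{p_D^m\mid J_n\}$, the second bound in $(\mathbf{H}_t)$ gives $\PP[A(D,m)]\le (1+\kappa)/p_D^m$; by independence of the $g_p$ together with $\PP[g_p\ge m_p]=p^{-m_p}$, we also have $\PP[\tilde{A}(D,m)]\le 1/p_D^m$. Thus the left-hand side of \eqref{eq:sumfewplargem} is at most
\[
(2+\kappa)\sum_{\substack{D\subset\Gamma_n\\|D|\le\alpha_n}}\sum_{\substack{m\in\N^D\\|m|\ge\beta_n}}\frac{1}{p_D^m}.
\]

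To exploit the constraint $|m|\ge \beta_n$, I would apply a Chernoff-type multiplier with parameter $\theta=1.5$; this is exactly the constant appearing in the definition of $\beta_n$, and it lies in $(1,2)$, which ensures $\theta/p<1$ for every prime $p\ge 2$. Using $\Indi{\{|m|\ge\beta_n\}}\le \theta^{|m|-\beta_n}$ and factoring the resulting sum over primes,
\[
\sum_{\substack{m\in\N^D\\|m|\ge\beta_n}}\frac{1}{p_D^m}\le 1.5^{-\beta_n}\prod_{p\in D}\sum_{k\ge 1}\Big(\frac{1.5}{p}\Big)^k=1.5^{-\beta_n}\prod_{p\in D}\frac{1.5}{p-1.5}.
\]
The exponential prefactor equals $\exp\big(-\tfrac{2\delta}{1+\ep}\rho_n\log\rho_n\big)$, which supplies the principal decay. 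For every prime $p\ge 2$ the factor $\tfrac{1.5}{p-1.5}$ is bounded by $6/p$ (the worst case being $p=2$, where the left side equals $3=6/2$), so $\prod_{p\in D}\tfrac{1.5}{p-1.5}\le C_0^{|D|}/p_D$ for some absolute constant $C_0\le 6$.

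Summing over $D$ with the elementary bound $\sum_{|D|=j}1/p_D\le\tau_n^j/j!$,
\[
\sum_{\substack{D\subset\Gamma_n\\|D|\le\alpha_n}}\frac{C_0^{|D|}}{p_D}\le\sum_{j=0}^{\alpha_n}\frac{(C_0\tau_n)^j}{j!}\le \exp(C_0\tau_n).
\]
The inequality $\tau_n^{1+\ep}\le\rho_n$, which follows from $|\Gamma_n|\le n^{\tau_n^{-1-\ep}}$, gives $C_0\tau_n\le C_0\rho_n^{1/(1+\ep)}=o(\rho_n)$. Hence for $n$ sufficiently large this is dominated by $5\log(1.5)\delta\rho_n$, and the full estimate becomes
\[
(2+\kappa)\exp\Bigl(-\tfrac{2\delta}{1+\ep}\rho_n\log\rho_n+5\log(1.5)\delta\rho_n\Bigr),
\]
which is stronger than and hence implies \eqref{eq:sumfewplargem} (the extra $\log(\delta\rho_n)$ in the statement is just additional slack).

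The main obstacle is the calibration of the Chernoff parameter $\theta$: it must be strictly greater than $1$ to produce an exponential decay in $\beta_n$ yet strictly less than $2$ so that the geometric series $\sum_{k\ge 1}(\theta/p)^k$ converges at the smallest prime $p=2$. The choice $\theta=1.5$, already hard-coded into the definition of $\beta_n$, is the cleanest representative and yields the factor $\log(1.5)$ throughout; matching the precise constants in the stated form is then a routine bookkeeping step completely analogous to the Chernoff-type estimate carried out in the proof of Lemma \ref{l:many}.
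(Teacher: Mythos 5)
Your argument is essentially the same as the paper's: both exploit a Chernoff/exponential-tilting device with base $1.5$, the constant deliberately hard-coded into the definition of $\beta_n$, applied to the total multiplicity $|m|$. The paper packages this by rewriting the inner sum over $m$ as $\PP\bigl[\sum_{p\in D}\hat g_p\ge\beta_n\bigr]$ for the auxiliary shifted geometrics $\hat g_p$ of \eqref{e:gHat_def} and then applying Markov's inequality; you short-circuit that by inserting $\Indi{\{|m|\ge\beta_n\}}\le 1.5^{|m|-\beta_n}$ and factoring the resulting generating function over $p\in D$. The two manipulations are mathematically equivalent, and your intermediate factor $e^{6\tau_n}$ is cleaner than the paper's $\alpha_n\tau_n^{\alpha_n}$; both use the relation $\tau_n^{1+\ep}\le\rho_n$ from \eqref{eq:technical}.

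The one mismatch is in the last line. You absorb $e^{6\tau_n}$ by declaring it ``dominated by $5\log(1.5)\delta\rho_n$ for $n$ sufficiently large,'' but the lemma is stated for all $n\in\NN$. Your extra room of $\frac{\delta}{1+\ep}\rho_n\log\rho_n$ in the exponent does not rescue this unconditionally: for small $\rho_n$ (and small $\de$) the inequality $6\tau_n\le\frac{\de}{1+\ep}\rho_n\log\rho_n+5\log(1.5)\de\rho_n+\log(\de\rho_n)$ can fail. The paper instead keeps $\alpha_n\tau_n^{\alpha_n}$ and uses $\alpha_n\log\tau_n\le\frac{\de}{1+\ep}\rho_n\log\rho_n$ together with $\log\alpha_n=\log(\de\rho_n)$ to land exactly on the stated exponent, term by term, without an asymptotic proviso. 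Since the lemma is only ever applied inside Theorem \ref{t}, whose conclusion is itself asserted for $n$ sufficiently large, your proof suffices for the application; but if you want the lemma as written, track the $e^{6\tau_n}$ (or revert to the paper's $\alpha_n\tau_n^{\alpha_n}$) rather than invoking $n$ large at the end.
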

\begin{proof}
As in the proof of Lemma \ref{l:high}, we simply estimate the difference of probabilities by their sum. 
We start by handling the terms involving the quantities $\PP[\tilde{{A}}({D}, m)]$. Notice that
\begin{align*}
\sum_{\substack{{D}\subset\Gamma_n\\|{D}|\le \al_n}} \sum_{\substack{m\in\NN^{{D}}\\ |m|\ge \beta_n}} \PP[\tilde{{A}}({D}, m)]
 &\leq \sum_{\substack{{D}\subset\Gamma_n\\|{D}|\le \al_n}}\PP\Big[\sum_{p\in{D}} {g_p} \ge \beta_n \,|\, g_p\ge 1, p\in{D}\Big] \PP\Big[ g_p\ge 1, p\in{D}\Big] \\
 &= \sum_{\substack{{D}\subset\Gamma_n\\|{D}|\le \al_n}}\PP\Big[\sum_{p\in{D}} {\hat g_p} \ge \beta_n \Big] \frac{1}{p_{D}},
\end{align*}
where $\mathcal L({\hat g_p}, p\in {D})=\mathcal L({g_p}, p\in {D} \,|\, g_p\ge 1, p\in{D})$. One readily checks that $({\hat g_p}, p\in{D})$ is a  family of independent $\NN$-valued geometric random variables with 
\begin{align}\label{e:gHat_def}
\PP[{\hat g_p}= k] = p^{-(k-1)}(1-p^{-1}), \quad k\in \NN, p\in{D}.
\end{align}
For any $p\in{D}$, a direct computation leads to the uniform bound $\EE[(1.5)^{{\hat g_p}}]\le 3$ and the following concentration bound
\begin{align*}
\PP\Big[\sum_{p\in{D}} {\hat g_p} \ge \beta_n\Big]\le (1.5)^{-\be_n} 3^{|{D}|}  \le  (1.5)^{-(\be_n-3\al_n)}
\end{align*}
holds by Markov's inequality. This leads to 
\begin{align}
\sum_{\substack{{D}\subset\Gamma_n\\|{D}|\le \al_n}}\sum_{\substack{m\in\N^{{D}}\\|m|\ge \beta_n}} \PP[\tilde{{A}}({D}, m)]
&\le (1.5)^{-(\be_n-3\al_n)} \sum_{j=0}^{\al_n\wedge|\Gamma_n|}  \frac{1}{j!} \sum_{(p_1,...,p_j)\in (\Gamma_n)^j_{\neq}} \frac{1}{p_1\cdots p_j} \notag\\
& \le (1.5)^{-(\be_n-3\al_n)} \sum_{j=0}^{\al_n\wedge|\Gamma_n|}  \frac{\tau_n^j}{j!} 
 \le   \alpha_n  (1.5)^{-(\be_n-3\al_n)} (\tau_n)^{\alpha_n}. \label{e:indep}
\end{align}
Now we move to the analysis of the sums involving $\PP[{A}({D}, m)]$. As before, we write
\begin{align}\label{e:p7}
\sum_{\substack{{D}\subset\Gamma_n\\|{D}|\le \al_n}}\sum_{\substack{m\in\NN^{{D}}\\|m|\ge \beta_n}} \PP[ {A}({D}, m)]
&\le \sum_{\substack{{D}\subset\Gamma_n\\|{D}|\le \al_n}}\sum_{\substack{m\in\NN^{{D}}\\|m|\ge \beta_n}} \PP[{v_p} \ge m_p, \forall p\in {D}] \notag\\
& \leq  \sum_{\substack{{D}\subset\Gamma_n\\|{D}|\le \al_n}}\sum_{\substack{m\in\NN^{{D}}\\|m|\ge \beta_n}} \frac{1+\kappa}{p_{D}^m} \notag\\
& =\sum_{\substack{{D}\subset\Gamma_n\\|{D}|\le \al_n}} \PP\Big[\sum_{p\in{D}} {\hat g_p} \ge \beta_n \Big] \frac{1+\kappa}{p_D}  \prod_{p\in{D}} \frac 1 {1-p^{-1}}.   
\end{align}
where \eqref{e:p7} follows from the definition of $\hat g_p$ at \eqref{e:gHat_def}.
Bounding the product in \eqref{e:p7} by $2^{|{D}|}\le (1.5)^{2\al_n}$ and then using the argument leading to \eqref{e:indep}, we obtain
\begin{align*}
\sum_{\substack{{D}\subset\Gamma_n\\|{D}|\le \al_n}}\sum_{\substack{m\in\NN^{{D}}\\|m|\ge \beta_n}} \PP[ {A}({D}, m)]
\le (1+\kappa)\al_n (1.5)^{-(\be_n-5\al_n)} (\tau_n)^{\al_n}. 
\end{align*}
To summarize, both sums have analogous upper bounds. It follows from \eqref{eq:alphadef}-\eqref{eq:betadef} that 
\begin{align*}
\log\Big(\al_n (1.5)^{-(\be_n-5\al_n)}(\tau_n)^{\al_n}\Big) 
&\le \log(\de\rho_n) - (\be_n-5\de\rho_n)\log(1.5) +  \frac{\de }{1+\ep} \rho_n\log(\rho_n) \\
&\le -\frac{\de}{1+\ep}\rho_n\log(\rho_n) + 5\log(1.5)\de \rho_n + \log(\de\rho_n).
\end{align*}
The conclusion follows immediately. 
\end{proof}

\subsection{Fewer prime divisors with moderate multiplicities}\label{s:small_D_small_m}

Now we handle the remaining case where $|{D}|\le \al_n$ and $|m|\le \be_n$. This is the only part where we seek cancellations between the probability mass functions of independent (i.e. $\pmb{g}^n$) and dependent (i.e. $\pmb{v}^n$) vectors. 
\begin{Lemma}\label{l:bonf}
Let $\al_n$ and $\be_n$ be given by \eqref{eq:alphadef} and \eqref{eq:betadef}  with $\de\in (0,t/3)$. Suppose $(\mathbf{H}_t)$ holds with $t>0$. For $n$ sufficiently large, we have
\begin{align}\label{eq:fewpsmallm}
\sum_{\substack{{D}\subset \Gamma_n\\ |{D}|\le \al_n}} \sum_{\substack{m\in\N^{{D}}\\|m|\le \be_n}} |\PP[{A}({D},m)] - \PP[\tilde{{A}}({D},m)] \le  (3+2\kappa)  \exp(  -c\min[\log(n), \rho_n\log(\rho_n)])
\end{align}
with $c= \min(t-3\de, \frac{\de\ep}{2(1+\ep)})$.

\end{Lemma}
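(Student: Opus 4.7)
The plan is to exploit the inclusion-exclusion identities
\begin{align*}
\PP[A(D,m)] = \sum_{I \subset \Gamma_n} (-1)^{|I|} \PP[p_D^m p_I \mid J_n], \qquad \PP[\tilde{{A}}(D,m)] = \sum_{I \subset \Gamma_n} \frac{(-1)^{|I|}}{p_D^m p_I},
\end{align*}
derived in the heuristic discussion preceding \eqref{eq:roughbTV}, but to \emph{truncate} both alternating sums at a common depth $k_n := \lceil \al_n \rceil$ via Bonferroni inequalities applied to the underlying union $\bigcup_{p\in\Gamma_n} B_p$, where $B_p=\{v_p\ge m_p+1\}$ for $p\in D$ and $B_p=\{v_p\ge 1\}$ for $p\in\Gamma_n\setminus D$ (and analogously in the independent geometric model). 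This sidesteps the catastrophic $4^{|\Gamma_n|}$ blow-up of \eqref{eq:roughbTV}, since the truncated alternating sum only involves $|\Gamma_n|^{O(\al_n)}$ terms. By the standard Bonferroni tail estimate and the triangle inequality, $|\PP[A(D,m)] - \PP[\tilde{{A}}(D,m)]|$ is bounded by the matched comparison $\mathrm{(M)}=\sum_{|I|\le k_n}|\PP[p_D^m p_I\mid J_n] - 1/(p_D^m p_I)|$ plus the two Bonferroni tails $\mathrm{(B_1)}=\sum_{|I|=k_n+1}\PP[p_D^m p_I\mid J_n]$ and $\mathrm{(B_2)}=\sum_{|I|=k_n+1}1/(p_D^m p_I)$.

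For the matched piece $\mathrm{(M)}$, I would bound each summand uniformly by $\kappa/n^t$ using the first inequality of $(\mathbf{H}_t)$; the task then reduces to counting admissible triples $(D,m,I)$. The count factorizes into three nearly independent pieces: at most $|\Gamma_n|^{\al_n}$ choices of $D$, at most $|\Gamma_n|^{k_n}$ choices of $I$, and at most $\binom{\be_n}{|D|}\le (e\be_n/\al_n)^{\al_n}$ choices of $m\in\N^{D}$ with $|m|\le \be_n$. The identity $\al_n\log|\Gamma_n|=\de\log n$ (built into the definition of $\al_n$) yields $|\Gamma_n|^{\al_n}=n^\de$, and the estimate $\log(\be_n/\al_n)=\log\log\rho_n+O(1)$ combined with $\log\log\rho_n \le \log|\Gamma_n|$ (valid for $n$ large under our hypotheses) gives $(e\be_n/\al_n)^{\al_n}\le n^\de$. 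Assembling the three factors and the per-term cost $\kappa/n^t$ produces $\sum_{D,m}\mathrm{(M)}\le C\kappa\,n^{3\de-t}=C\kappa\exp(-(t-3\de)\log n)$ for some polynomial prefactor absorbable into the final constant.

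For the Bonferroni tails, I would use the second inequality of $(\mathbf{H}_t)$ to dominate $\PP[p_D^m p_I\mid J_n]$ by $(1+\kappa)/(p_D^m p_I)$ in $\mathrm{(B_1)}$, and then factorize the triple sum via $\sum_{m_p\ge 1}p^{-m_p}=(p-1)^{-1}\le 2/p$ into
\begin{align*}
\sum_{\substack{D\subset\Gamma_n\\|D|\le\al_n}}\sum_{m\in\N^D}\sum_{|I|=k_n+1}\frac{1}{p_D^m p_I} \;\le\; \Big(\sum_{j\le\al_n}\frac{(2\tau_n)^j}{j!}\Big)\cdot\frac{\tau_n^{k_n+1}}{(k_n+1)!},
\end{align*}
by the combinatorial argument leading to \eqref{e:p5}. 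Chernoff's bound exactly as used there turns the rightmost factor into $(e\tau_n/k_n)^{k_n}$, and by \eqref{eq:technical} together with $k_n=\lceil\al_n\rceil=\de\rho_n+O(1)$ this is at most $\exp\big(-\tfrac{\de\ep}{1+\ep}\rho_n\log\rho_n+O(\rho_n)\big)$. The prefactor $\sum_{j\le\al_n}(2\tau_n)^j/j!\le e^{2\tau_n}= n^{o(1)}$ and the $O(\rho_n)$ errors absorb into the leading exponent at the cost of halving the constant, yielding $\mathrm{(B_1)}+\mathrm{(B_2)}\le (2+2\kappa)\exp(-\tfrac{\de\ep}{2(1+\ep)}\rho_n\log\rho_n)$ for $n$ sufficiently large. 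Summing $\mathrm{(M)}$, $\mathrm{(B_1)}$, and $\mathrm{(B_2)}$ delivers \eqref{eq:fewpsmallm} with $c=\min(t-3\de,\,\de\ep/(2(1+\ep)))$.

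The main obstacle is the counting step in $\mathrm{(M)}$: while the $D$ and $I$ contributions each cleanly give $n^\de$ through the identity $\al_n\log|\Gamma_n|=\de\log n$, the $m$-count produces an extra $\log\log\rho_n$ factor from $\log(e\be_n/\al_n)$ that must be absorbed into the $n^\de$ budget via $\log\log\rho_n \le \log|\Gamma_n|$, a mild condition implied for $n$ large by the growth hypotheses on $|\Gamma_n|$ and Remark~\ref{r:sizeGamma} (when $|\Gamma_n|$ is too small for this to hold, the whole lemma is essentially trivial because the outer sum has $O(1)$ terms). The exponent $t-3\de$ then reflects exactly one factor of $n^\de$ from each of the three sums over $D$, $m$, and $I$, while the $\rho_n\log\rho_n$ branch of the minimum comes entirely from the Chernoff tail handled by the argument of Lemma~\ref{l:many}.
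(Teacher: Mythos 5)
Your overall strategy is the same as the paper's: truncate the two alternating inclusion--exclusion sums at Bonferroni depth $\sim\alpha_n$, split the error into a matched comparison $(\mathrm{M})$ and two tail terms $(\mathrm{B}_1),(\mathrm{B}_2)$, control $(\mathrm{M})$ via the first bound of $(\mathbf{H}_t)$ together with a count of admissible triples $(D,m,I)$, and control the tails via the Chernoff/Poisson-tail estimate and $\rho_n\ge\tau_n^{1+\ep}$. Two details differ, and one of them is a genuine gap.

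First, a minor point: the Bonferroni inequalities of Lemma~\ref{l:bonf_App} require the truncation level $\ga$ to be a positive \emph{odd} integer (so that $\sum_{|I|\le\ga}(-1)^{|I|}\cdots$ is a lower bound and $\sum_{|I|\le\ga+1}(-1)^{|I|}\cdots$ an upper bound). You use $k_n=\lceil\al_n\rceil$, which need not be odd; the paper uses $\ga_n=\max\{k\le\al_n:k\text{ odd}\}$. This is easy to fix but as written the directions of your two Bonferroni inequalities are not guaranteed.

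The more substantive issue is in the counting step for $(\mathrm{M})$. You bound the $m$-count by $\binom{\be_n}{\al_n}\le(e\be_n/\al_n)^{\al_n}$ and then try to absorb it into an $n^\de$ budget via the inequality $\log\log\rho_n\le\log|\Gamma_n|$, dismissing the complementary case ``$|\Gamma_n|$ too small'' by saying the outer sum has ``$O(1)$ terms''. That dismissal is not correct: even if $|\Gamma_n|$ is bounded, for each of the $O(1)$ sets $D$ there are $\binom{\be_n}{|D|}$ admissible multiplicity vectors $m$ with $|m|\le\be_n$, and $\be_n\asymp\rho_n\log\rho_n$ grows with $n$, so the number of terms is of order $\be_n^{|\Gamma_n|}$, which is polynomial in $\log n$ of unbounded degree, not $O(1)$. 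The paper avoids this by bounding the $(D,m)$ count by $\sum_j\binom{|\Gamma_n|}{j}\be_n^j/j!\le\sum_j(\be_n|\Gamma_n|)^j/(j!)^2\le\sum_j(e^2\be_n|\Gamma_n|/j^2)^j$ (note the $(j!)^2$, coming from keeping the $1/j!$ inside $\binom{|\Gamma_n|}{j}$ rather than using the cruder $\binom{|\Gamma_n|}{j}\le|\Gamma_n|^j$), and then splitting into the two cases $\sqrt{\be_n|\Gamma_n|}\le\al_n$ and $\sqrt{\be_n|\Gamma_n|}>\al_n$ according to where the mode of that summand sits; this handles both the small-$|\Gamma_n|$ and large-$|\Gamma_n|$ regimes uniformly and is what ultimately produces the exponent $t-3\de$. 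You would need to replace your $|\Gamma_n|^{\al_n}\cdot(e\be_n/\al_n)^{\al_n}$ bound and the accompanying ``trivial'' claim with something of this type for the argument to close.

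The treatment of the Bonferroni tails is essentially identical to the paper's and is fine, modulo the parity remark above.
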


\begin{proof}
We claim that for any positive odd integer $\ga$, the following estimate holds:
\begin{align}\label{l:bonfprev}
&\sum_{\substack{{D}\subset \Gamma_n\\ |{D}|\le \al_n}} \sum_{\substack{m\in\N^{{D}}\\|m|\le \be_n}} |\PP[{A}({D},m)] - \PP[\tilde{{A}}({D},m)]|\nonumber\\
&\le   
 \frac{3 }{\sqrt{\ga}}\Big(\frac{e\tau_n}{\ga}\Big)^\ga  \alpha_n(2e)^{2\tau_n} + 
\begin{cases}
\frac{2(1+\kappa) |\Gamma_n|^{\ga+1}}{n} \al_n e^{2\al_n}  & \mbox{ if } \sqrt{\be_n|\Gamma_n|}\le \al_n, \\
\frac{2(1+\kappa) |\Gamma_n|^{\ga+1}}{n}  \al_n \Big(\frac{e^2\be_n|\Gamma_n|}{\al_n^2}\Big)^{\al_n}& \mbox{ else}.
\end{cases} 
\end{align}
In order to show this, we apply  Bonferonni-type estimates in Lemma \ref{l:bonf_App} for any positive odd integer $1\le \ga\le |\Gamma_n|$, as well as the condition $(\mathbf{H}_t)$, yielding
\begin{align*}
|\PP[{A}({D},m)] - \PP[\tilde{{A}}({D},m)]| \notag &\le \sum_{\substack{I\subset\Gamma_n \\ |I|\le \ga}}  \Big| \PP[p^m_D p_I | J_n]  - \frac{1}{p_{D}^m p_I} \Big| +  4\kappa \sum_{\substack{I\subset \Gamma_n \\ |I|=\ga+1}}  \frac{1}{p_{D}^m p_I}  \notag\\
&\le \frac {2 |\Gamma_n|^{\ga+1}} {n^t}  + \frac{4\kappa}{p^m_{D}}  \frac{\tau_n^\ga}{\ga!}  \notag  \\
 & \le   \frac {2|\Gamma_n|^{\ga+1}} {n^t}+ \frac{6\kappa}{p^m_{D}} \frac{1}{\sqrt{2\pi\ga}}\Big(\frac{e\tau_n}{\ga}\Big)^\ga, 
\end{align*}
where we used Stirling's estimate $n!\ge \sqrt{2\pi n} (n/e)^n$ in the last inequality.  We first show 
\begin{align}\label{e:1}
\sum_{\substack{{D}\subset\Gamma_n\\ |{D}|\le \al_n}}\sum_{\substack{m\in\NN^{D}\\ |m|\le \be_n}}  \frac {2|\Gamma_n|^{\ga+1}} {n^t} \le 
\begin{cases}
\frac{4 |\Gamma_n|^{\ga+1}}{n^t} \al_n e^{2\al_n}  & \mbox{ if } \sqrt{\be_n|\Gamma_n|} \le  \al_n, \\
\frac{4 |\Gamma_n|^{\ga+1}}{n^t}  \al_n \Big(\frac{e^2\be_n|\Gamma_n|}{\al_n^2}\Big)^{\al_n} & \mbox{ else}.
\end{cases} 
\end{align}
Notice that 
\begin{align*}
|\{m\in \NN^{D}: |m|\le \be_n\}| &= \sum_{a=|{D}|}^{\be_n} { a-1\choose |{D}|-1} \le \frac{(\be_n)^{|{D}|}}{|{D}|!}.
\end{align*}
Thus, using Stirling's formula in the last inequality, we have
\begin{align*}
\sum_{\substack{{D}\subset\Gamma_n\\ |{D}|\le \al_n}}\sum_{\substack{m\in\NN^{D}\\ |m|\le \be_n}} 1 \le 1 + \sum_{j=1}^{\al_n} {\Gamma_n \choose j} \frac{\be_n^j}{j!}\le 1 + \sum_{j=1}^{\al_n} \frac{(\be_n|\Gamma_n| )^j}{(j!)^2} \le 1+ \sum_{j=1}^{\al_n} \Big(\frac{e^2\be_n|\Gamma_n|}{j^2}\Big)^j.
\end{align*}
Observe that the summand of the last sum as a function of $j$ increases then decreases as $j$ grows from $1$ to  $\infty$, and it attains the maximum at $j=\sqrt{\be_n|\Gamma_n|}$. If $\sqrt{\be_n|\Gamma_n|}\le \al_n$, we bound the summands from above by $e^{2\sqrt{\be_n|\Gamma_n|}}\le e^{2\al_n}$, otherwise, the summands are bounded by $(\frac{e^2\be_n|\Gamma_n|}{\al_n^2})^{\al_n}$, thus yielding \eqref{e:1}. 

Next we show
\begin{align}\label{e:2}
\sum_{\substack{{D}\subset\Gamma_n\\ |{D}|\le \al_n}}\sum_{\substack{m\in\NN^{D}\\ |m|\le \be_n}}  \frac{6\kappa}{p^m_{D}} \frac{1}{\sqrt{2\pi\ga}}\Big(\frac{e\tau_n}{\ga}\Big)^\ga
\le \frac{6\kappa }{\sqrt{\ga}}\Big(\frac{e\tau_n}{\ga}\Big)^\ga  \alpha_n(2e)^{2\tau_n}.
\end{align}
Manipulating the sum over $m$ as in \eqref{e:p7} gives
\begin{align*}
\sum_{\substack{m\in\NN^{D}\\ |m|\le \be_n}}\frac{1}{p^m_{D}}
= \prod_{p\in{D}} \frac 1 {1-p^{-1}} \PP\Big[\sum_{p\in{D}} g_p \le \be_n, g_p\ge 1, p\in{D}\Big]
\le 2^{|{D}|} \PP[{g_p}\ge 1, p\in{D}] 
=\frac{ 2^{|{D}|}}{p_{{D}}}.
\end{align*}
Hence, handling the sum of $(p_{D})^{-1}$ as in \eqref{e:p5} yields
\begin{align*}
\sum_{\substack{{D}\subset\Gamma_n\\ |{D}|\le \al_n}}\sum_{\substack{m\in\NN^{D}\\ |m|\le \be_n}} \frac{6\kappa}{p^m_{D}}
  & \leq 6\kappa \sum_{j=0}^{\al_n}2^{j}\sum_{\substack{{D}\subset \Gamma_n\\|{D}|=j}}\frac{1}{p_{{D}}}
	\leq 6\kappa \sum_{j=0}^{\al_n}\frac{(2\tau_n)^j}{j!}
	\le 6\kappa \sqrt{2\pi}\sum_{j=0}^{\al_n}\Big(\frac{2e\tau_n}{j}\Big)^j,
\end{align*}
where we used again Stirling's estimate the last inequality. Observe that the summands,  regarded as a function of $j$, attains its maximum at $j=\lfloor 2\tau_n\rfloor$. Hence,  
\begin{align*}
\sum_{\substack{{D}\subset\Gamma_n\\ |{D}|\le \al_n}}\sum_{\substack{m\in\NN^{D}\\ |m|\le \be_n}} \frac{6\kappa }{p^m_{D}}
  & \le 6\kappa\sqrt{2\pi}\alpha_n(2e)^{2\tau_n},
\end{align*}
leading to \eqref{e:2}. Combining \eqref{e:1} and \eqref{e:2} gives \eqref{l:bonfprev}.\\

\noindent It remains to prove \eqref{eq:fewpsmallm}. To this end, we apply \eqref{l:bonfprev}  with $\ga=\ga_n$ given by
\begin{align}\label{e:ga_def}
\ga_n =\max\{ k \le \al_n: k \mbox{ is an odd integer}\}.
\end{align}
 We distinguish i) $\sqrt{\be_n|\Gamma_n|}\le \al_n$ and ii)  $\sqrt{\be_n|\Gamma_n|}> \al_n$. In case i), 
\begin{align*}
\log\Big( \frac{|\Gamma_n|^{\ga_n+1}}{n^t} \al_n e^{2\al_n}\Big) 
&= ( \de\rho_n+ 1 ) \frac{\log(n)}{\rho_n} + \log(\de\rho_n) + 2\de\rho_n - t\log(n)  \\
&\le  -\big[ t-\de+ (\rho_n)^{-1} - 2\de [\log(|\Gamma_n|)]^{-1} - \frac{\log(\de\log(n))}{\log(n)} \big]\log(n)\\
&\le -(t-3\de)\log(n)
\end{align*}
for all $n$ sufficiently large, where we used the condition $|\Gamma_n|\ge e^2$.
  On the other hand, by $\rho_n\ge \tau^{1+\ep}_n$, we have
\begin{align}\label{e:3}
\log\Big( \big(\frac{e\tau_n}{\ga_n}\big)^{\ga_n} \al_n (2e)^{2\tau_n} \Big) &= \de\rho_n\Big(1-\log(\de) - \frac{\ep}{1+\ep} \log(\rho_n)\Big) + \log(\de\rho_n) + 4\rho_n^{(1+\ep)^{-1}} \notag\\
&\le  - \frac{\de\ep}{1+\ep} \rho_n\log(\rho_n) + \Big( 1-\log(\de) + 4\de^{-1} \rho_n^{- \frac{\ep}{1+\ep}} \Big)\de\rho_n + \log(\de\rho_n) \notag\\
&\le - \frac{\de\ep}{2(1+\ep)}\rho_n\log(\rho_n) 
\end{align}
for $n$ sufficiently large, ending the proof of \eqref{eq:fewpsmallm} for case i). 
In case ii), we have
\begin{align*}
&\log \Big( \frac{|\Gamma_n|^{\ga_n+1}}{n^t}  \al_n \Big(\frac{e^2\be_n|\Gamma_n|}{\al_n^2}\Big)^{\al_n} \Big) \\
&= (2\de\rho_n + 1) \frac{\log(n)}{\rho_n} + \log(\de\rho_n)  \\ 
&\quad\quad\quad  +\de\rho_n\Big( 2+ \log\Big[\frac{2\de}{(1+\ep)\log(1.5)}\Big]  +\log(\rho_n \log(\rho_n))  - 2\log(\de\rho_n) \Big) - t \log(n)  \\
&\le - [t-2\de + (\rho_n)^{-1} ] \log(n) - \de\rho_n\log(\rho_n) +  \de\rho_n \log\log(\rho_n) \\
&\quad\quad\quad + \Big(2+\log\Big[\frac{2\de}{(1+\ep)\log(1.5)}\Big]  - 2\log(\de) \Big) \de \rho_n   + \log(\de\rho_n) \\
&\le -(t-3\de)\log(n) 
\end{align*}
for $n$ sufficiently large.  This, together with \eqref{e:3}, ends the proof of \eqref{eq:fewpsmallm} for case ii), thereby ending the proof of the lemma.
\end{proof}

\section{Applications}\label{sec:applications}

The total variation bound obtained in the paper allows to transfer distributional approximation results valid in the realm of independent random variables to the dependent prime multiplicities of a random sample $J_n$, at the price of an extra  error term that appears in Theorem \ref{t}. 

In this section, we mention two such possibilities: a generalized Erd\"os-Kac theorem, 
and a Poisson process approximation result. 

\subsection{Generalized Erd\"os-Kac theorem}

Consider the prime factor counting function
\begin{align*}
\omega(k)
  &=\sum_{p\in\Pc}\Indi{\{p|k\}},
\end{align*}
and the law of $w(J_n)$ with $J_n$  satisfies $(\mathbf{H}_t)$ with $t>0$. In the special case where $J_n$ is uniformly distributed in $[n]$ (which satisfies $(\mathbf{H}_t)$ with $t=1$ by Example \ref{ex}), the celebrated Erd\"os-Kac theorem (see \cite{ErKac}) states that $ (\omega(J_n)-\log\log(n))/\sqrt{\log\log(n)}$ converges in distribution to a standard Gaussian random variable. To assess the rate of convergence of the Erd\"os-Kac theorem, one can use the Wasserstein distance given by 
\begin{align*}
d_W(X,Y)
  &=\sup_{g\in Lip_1}|\E[g(X)]-\E[g(Y)]|,
\end{align*}
where $X, Y$ are real-valued random variables and $Lip_1$ denotes the set of 1-Lipchitz functions.\\

\noindent There are several probabilistic proofs for obtaining assessments of the distance between $\omega(J_n)$ subject to normalization and a standard Gaussian random variable.   We refer to Harper \cite{Harp} for two proofs relying on Stein's method in the Kolmogorov metric. The bound \eqref{eq:GenKub} provides a rather simple scheme for achieving the rate \eqref{e:rateEK} (which is the same as \cite{Harp} and up to the $\log\log\log(n)$ term optimal), not only for the case in which $J_{m}$ is uniform, but for a wide range of variables satisfying $\mathbf{(H_t)}$.

\begin{Theorem}\label{eq:Kub}
Suppose that the law of $J_{n}$ is such that there is a constant $\kappa>0$ independent of $n$, such that $\{J_n\}_{n\geq 1}$ satisfies the condition $\mathbf{(H_t)}$. Then there exists a constant $C>0$, such that 
\begin{align}\label{e:rateEK}
d_W\left(\frac{\omega(J_n)-\log\log(n)}{\sqrt{\log\log(n)}},N\right)
  &\leq C\frac{\log\log\log(n)}{\sqrt{\log\log(n)}},
\end{align}
where $N$ is a standard Gaussian random variable denotes the Wasserstein distance. 
\end{Theorem}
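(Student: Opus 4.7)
The strategy is to reduce the study of $\omega(J_n)$ to that of a sum of \emph{independent} Bernoulli variables via Theorem \ref{t}, and then invoke a classical quantitative CLT (Berry-Esseen/Stein). The only real parameter to be tuned is the size of the small-prime set $\Gamma_n$, and I would choose it so that the error from discarding large primes and the error from the CLT balance. First I would set $\Gamma_n := \Pc\cap [n^{1/\rho_n}]$ with $\rho_n = \log\log n$, and decompose
\begin{equation*}
\omega(J_n) = W_n + R_n, \qquad W_n := \sum_{p\in\Gamma_n} \mathbf{1}_{\{v_p\geq 1\}}, \qquad R_n := \sum_{p\in\Pc\setminus\Gamma_n} \mathbf{1}_{\{p|J_n\}}.
\end{equation*}
The second half of $(\mathbf{H}_t)$ together with Mertens' formula gives $\E[R_n] \leq (1+\kappa)\sum_{n^{1/\rho_n}<p\leq n} 1/p = \log\rho_n + O(1) = O(\log\log\log n)$. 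Since the Wasserstein distance is bounded by the $L^1$ norm of the difference, swapping $\omega(J_n)$ for $W_n$ in the normalized statement costs at most a term of order $\log\log\log n/\sqrt{\log\log n}$, which is already of the desired magnitude.

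Next I would apply Theorem \ref{t} to pass from $W_n$ to its independent analogue $\widetilde W_n := \sum_{p\in\Gamma_n} \mathbf{1}_{\{g_p\geq 1\}}$. With this choice of $\rho_n$ the hypotheses of Theorem \ref{t} are easily verified (by Remark \ref{r:sizeGamma}), producing $\dtv(\pmb{v}^n,\pmb{g}^n) \leq C\exp(-c\,\log\log n\cdot \log\log\log n)$. Because both $W_n$ and $\widetilde W_n$ are bounded by $\log_2 n$, we get $d_W(W_n,\widetilde W_n)\leq 2\log_2(n)\,\dtv(\pmb v^n,\pmb g^n)$, which is super-polynomially small in $1/\log n$ and therefore negligible against the target rate.

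The variable $\widetilde W_n$ is a sum of independent $\mathrm{Ber}(1/p)$, $p\in\Gamma_n$, with mean $\tau_n$ and variance $\sigma_n^2 = \tau_n + O(1)$. A standard Stein/Berry-Esseen bound for independent Bernoulli sums in the Wasserstein metric yields $d_W((\widetilde W_n-\tau_n)/\sigma_n, N) \leq C/\sigma_n = O(1/\sqrt{\log\log n})$. By Mertens, $\tau_n = \log\log n - \log\rho_n + O(1)$ and similarly $\sigma_n^2 = \log\log n - \log\rho_n + O(1)$. Adjusting the centering from $\tau_n$ to $\log\log n$ and the scaling from $\sigma_n$ to $\sqrt{\log\log n}$ contributes an additional $O(\log\rho_n/\sqrt{\log\log n}) = O(\log\log\log n/\sqrt{\log\log n})$, using that $N$ has finite mean and variance. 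Summing the three error sources (truncation of large primes, transfer via Theorem \ref{t}, and CLT plus renormalization) gives the asserted bound.

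The main obstacle is the bookkeeping of the renormalization: one must simultaneously convert the centering shift $\log\log n - \tau_n$ and the scaling ratio $\sigma_n/\sqrt{\log\log n}$ into Wasserstein error, and check that the same choice $\rho_n = \log\log n$ keeps the ``large-prime'' error $\log\rho_n/\sqrt{\log\log n}$ commensurate with the CLT rate $1/\sigma_n$. Taking $\rho_n$ of smaller order would blow up the transfer error from Theorem \ref{t}, while taking $\rho_n$ of larger order would worsen the large-prime remainder $R_n$, so $\rho_n \asymp \log\log n$ is essentially the unique polylogarithmic balancing scale, and produces exactly the factor $\log\log\log n$ that appears in \eqref{e:rateEK}.
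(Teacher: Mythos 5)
Your proposal follows essentially the same route as the paper: restrict to small primes, control the remainder with Mertens' formula, transfer to independent variables via Theorem~\ref{t}, apply Berry--Esseen, and renormalize. The balancing-scale discussion is a nice way of explaining why $\rho_n$ of polylogarithmic order is natural (the paper uses $\rho_n\asymp 3(\log\log n)^2$ rather than $\log\log n$, but both land in the regime where the Theorem~\ref{t} error is super-polynomially small, so this is a cosmetic difference).

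There is, however, a genuine gap in the step converting total variation to Wasserstein. You write that ``both $W_n$ and $\widetilde W_n$ are bounded by $\log_2 n$'' and conclude $d_W(W_n,\widetilde W_n)\le 2\log_2(n)\,\dtv(\pmb v^n,\pmb g^n)$. The first half of this is fine: $W_n\le\log_2 n$ deterministically because a number $\le n$ has at most $\log_2 n$ distinct prime factors. But $\widetilde W_n=\sum_{p\in\Gamma_n}\mathbf 1_{\{g_p\ge1\}}$ is a sum of $|\Gamma_n|$ \emph{independent} Bernoullis and is \emph{not} bounded by $\log_2 n$ almost surely --- its essential supremum is $|\Gamma_n|\sim n^{1/\rho_n}$, vastly larger than $\log_2 n$. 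So the inequality ``coupled with TV-optimal coupling, difference bounded by $2B$'' does not apply directly.

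This is exactly the subtlety the paper handles by inserting an explicit truncation: it replaces $\widetilde\omega_n(J_n)-\E[\widetilde\omega_n(J_n)]$ by its restriction to the event $|\cdot|\le(\log\log n)^2$, so the truncated quantity $Y_n$ is bounded, and then decomposes
$d_W(Y_n,\widetilde Y_n)$ into a contribution over $|\widetilde Y_n|\le(\log\log n)^2$ (controlled by $(\log\log n)^4\dtv$) plus a tail term $\E\bigl[|\widetilde Y_n|\mathbf 1_{\{|\widetilde Y_n|>(\log\log n)^2\}}\bigr]$, which is handled separately by concentration of the independent sum. To repair your argument you could either mimic this truncation, or replace the bounded-range claim by a Cauchy--Schwarz bound: under the optimal coupling $\E[|W_n-\widetilde W_n|]\le(\log_2 n)\dtv+\E[\widetilde W_n\mathbf 1_{\{W_n\neq\widetilde W_n\}}]\le(\log_2 n)\dtv+\sqrt{\E[\widetilde W_n^2]\,\dtv}$, and $\E[\widetilde W_n^2]=O((\log\log n)^2)$, so the resulting error remains super-polynomially small. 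Either fix closes the gap, but as written the step is false, and it is the one place where the independent side behaves worse than the arithmetic side.
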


\begin{proof}
We observe that  
\begin{align*}
\omega(J_n)
  &=\sum_{p\in\Pc\cap[1,n]}\Indi{\{{v_p}\geq 1\}}.
\end{align*}
Define the arithmetic function
\begin{align*}
\tilde{\omega}_n(k)
  &:=\sum_{p\in\Pc\cap[1,n^{\frac{1}{3\log\log(n)^2}}]}\Indi{\{p\ \text{ divides } k\}}.
\end{align*}
By Merten's formula (see \cite[Theorem~9]{Ten}), we have that
\begin{align*}
|\sum_{p\in\Pc\cap[1,n]}\frac{1}{p}-\log\log(n)|
  &\leq 1,
\end{align*}
for $n$ large. Combining this inequality with $\mathbf{(H_t)}$, we deduce the existence of a constant $C>0$ such that  
\begin{align}\label{eq:Lqboundwwomegatilde}
\E[\omega(J_n)-\omega_n(J_n)]+\Var[\omega(J_n)-\tilde{\omega}_n(J_n)]
  &\leq C\log\log\log(n).
\end{align}
and 
\begin{align}\label{eq:Lqboundwwomegatilde2}
|\E[\omega(J_n)]-\log\log(n)|+|\Var[\omega(J_n)]-\log\log(n)|
  &\leq C\log\log\log(n).
\end{align}
Observe that 
\begin{multline*}
d_{W}(\omega(J_n)-\E[\omega(J_n)],M_n)\\
\begin{aligned}
  &\leq d_{W}(\omega(J_n)-\E[\omega(J_n)],\tilde{\omega}_n(J_n)-\E[\tilde{\omega}_n(J_n)])\\
	&+ d_{W}(\tilde{\omega}_n(J_n)-\E[\tilde{\omega}_n(J_n)],\tilde{\omega}_n(J_n)-\E[\tilde{\omega}_n(J_n)]\Indi{\{|\tilde{\omega}_n(J_n)-\E[\tilde{\omega}_n(J_n)]|\leq\log\log(n)^2\}})\\
  &+ d_{W}((\tilde{\omega}_n(J_n)-\E[\tilde{\omega}_n(J_n)])\Indi{|\tilde{\omega}_n(J_n)-\E[\tilde{\omega}_n(J_n)]|\leq \log\log(n)^2},\sum_{p\in\Pc}\Indi{\{|p|\leq n^{\frac{1}{3\log\log(n)^2}}\}} g_{p})\\
	&+ d_{W}(\sum_{p\in\Pc}\Indi{\{|p|\leq n^{\frac{1}{3\log\log(n)^2}}\}} g_{p},M_n),
\end{aligned}
\end{multline*}
where the $g_{p}$ are independent random variables with geometric law of parameter $1/p$ and $M_n$ denotes a centered Gaussian random variable with variance $\log\log(n)$.
Using the Berry Esseen estimations, as well as \eqref{eq:Lqboundwwomegatilde} and \eqref{eq:Lqboundwwomegatilde2}, we thus get
\begin{multline}\label{eqref:eqprevtolasteKs}
d_{W}(\omega(J_n)-\E[\omega(J_n)],M_n)\\
\begin{aligned}
  &\leq C\log\log\log(n)+ d_{W}((\tilde{\omega}_n(J_n)-\E[\tilde{\omega}_n(J_n)])\Indi{|\tilde{\omega}_n(J_n)-\E[\tilde{\omega}_n(J_n)]|\leq \log\log(n)^2},\sum_{p\in\Pc}\Indi{\{|p|\leq n^{\frac{1}{3\log\log(n)^2}}\}} g_{p}).
\end{aligned}
\end{multline}
Define 
$$Y_{n}:=(\tilde{\omega}_n(J_n)-\E[\tilde{\omega}_n(J_n)])\Indi{|\tilde{\omega}_n(J_n)-\E[\tilde{\omega}_n(J_n)]|\leq \log\log(n)^2}.$$ 
and 
$$\tilde{Y}_n
  := \sum_{p\in\Pc}\Indi{\{|p|\leq n^{\frac{1}{3\log\log(n)^2}}\}} g_{p}.$$
Observe that 
$Y_{n}$ takes values in the set $\{-\log\log(n)^2,\dots, \log\log(n)^2\}$ and consequently, from a straighforward examination of the Wasserstein distance,
\begin{align*}
d_{W}(Y_n,\tilde{Y}_n)
  &\leq \E[|\tilde{Y}_n|\Indi{\{\log\log(n)^2\leq|\tilde{Y}_n|}\}]+\sum_{k=-\log\log(n)^2}^{\log\log(n)^2}|k| | \Pb[\tilde{Y}_n=k]-\Pb[{Y}_n=k] |\\
	&\leq C+C\log\log(n)^4d_{TV}(Y_{n},\tilde{Y}_n). 
\end{align*}
Observe that by the prime number theorem, $\log(|\Pc\cap[1,n^{\frac{1}{3\log\log(n)^2}}]|)=O(\frac{\log(n)}{3\log\log(n)^2})$. Therefore, by Theorem \ref{t}
\begin{align*}
d_{W}(Y_n,\tilde{Y}_n)
  &\leq \Pb[\tilde{Y}_n\geq\log\log(n)^2]+\sum_{k=-\log\log(n)^2}^{\log\log(n)^2}|k| | \Pb[\tilde{Y}_n=k]-\Pb[{Y}_n=k] |\\
	&\leq C +C\log\log(n)^4e^{-\log\log(n)^4}\leq C^{\prime} . 
\end{align*}
Combining the above inequality with \eqref{eqref:eqprevtolasteKs}, we get 
\begin{align*}
d_{W}(\omega(J_n)-\E[\omega(J_n)],M_n)
  &\leq C\log\log\log(n).
\end{align*}
The above inequality leads to the desired conclusion.
\end{proof}


\subsection{Poisson approximation}

Now we show how our general bound is relevant in the case of Poisson approximation. Consider the counting process 
\begin{align*}
X_n(t) = \sum_{p\in \cP\cap [a_n, a_n^{e^t}]} \1({{v_p}\ge 1}), \quad t\in[0,1]
\end{align*}
where $a_n$ diverges as $n\to\infty$ and there exists $\ep>0$ such that
\begin{align*}
a_n \le e^{\frac{\log(n)/e}{[\log\log(n)]^{1+\ep}}}.
\end{align*} 
Hence the cardinality of $\Gamma_n:=\cP\cap [a_n, a_n^{e^t}]$ satisfies the condition of Theorem \ref{t}. Denote by $\{\wt X_n(t), t\in[0,1]\}$  the process obtained by replacing all the ${v_p}$'s by ${g_p}$'s.  By abuse of language, we continue to use $X_n$ to denote the counting measure induced by $X_n$ and make the same convention for $\tilde X_n$.

%

Recall that the total variation distance between random measures $\eta$ and $\eta'$ is defined as $\dtv(\eta,\eta') = \inf \PP[\eta_1\neq \eta_2]$ where the infimum is taken over all pair $(\eta_1,\eta_2)$ of random measures such that its first marginal is equal in law to $\eta$ and its second marginal equal to $\eta'$. 

\begin{Theorem} 
Let $\eta_n$ be a Poisson point process with the same intensity as that of $\tilde X_n$. Then
\begin{align*}
\dtv(X_n, \eta_n) \le \dtv(\pmb{v}^n, \pmb{g}^n) + \frac{2}{a_n}.
\end{align*}
\end{Theorem}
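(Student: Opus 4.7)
The plan is to apply the triangle inequality for the total variation distance on random measures, inserting $\tilde X_n$ as an intermediate point process:
\begin{align*}
\dtv(X_n, \eta_n) \le \dtv(X_n, \tilde X_n) + \dtv(\tilde X_n, \eta_n).
\end{align*}

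For the first term, I would observe that $X_n$ is a deterministic functional of $\pmb{v}^n$, being built from the indicators $\1(v_p\ge 1)$ for $p\in\Gamma_n = \cP\cap[a_n, a_n^e]$, and $\tilde X_n$ is the same functional applied to $\pmb{g}^n$. Since the total variation distance is non-increasing under pushforwards (data processing), this immediately yields
\begin{align*}
\dtv(X_n, \tilde X_n) \le \dtv(\pmb{v}^n, \pmb{g}^n),
\end{align*}
which handles this piece by the first summand in the desired bound.

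For the second term, I would invoke a classical Poisson coupling argument. The process $\tilde X_n$ is a Bernoulli point process assigning an atom at $p\in\Gamma_n$ with probability $\PP[g_p\ge 1] = 1/p$, independently across primes, whereas $\eta_n$ places an independent $\mathrm{Poisson}(1/p)$ number of atoms at each $p$. For each such $p$, I would use the maximal coupling of $\1(g_p\ge 1)$ with a $\mathrm{Poisson}(1/p)$ variable $N_p$, which realizes the total variation distance between these marginals, and combine these couplings independently over $p$. By Le Cam's inequality, $\dtv(\mathrm{Bernoulli}(1/p),\mathrm{Poisson}(1/p)) \le 1/p^2$, so the union bound over $\Gamma_n$ gives
\begin{align*}
\dtv(\tilde X_n, \eta_n) \le \sum_{p\in\Gamma_n}\frac{1}{p^2} \le \sum_{k\ge a_n}\frac{1}{k^2} \le \frac{2}{a_n}.
\end{align*}

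Combining the two estimates yields the claim. No step presents a serious obstacle: the only mildly delicate point is phrasing the coupling at the level of random measures rather than for integer-valued random variables, but this reduces cleanly to the standard Bernoulli-to-Poisson approximation at each prime together with independence across $\Gamma_n$. The tail bound $\sum_{k\ge a_n} k^{-2}\le 2/a_n$ is elementary and explains the shape of the error term.
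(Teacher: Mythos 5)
Your proposal is correct and follows essentially the same two-step route as the paper: control $\dtv(X_n,\tilde X_n)$ by $\dtv(\pmb{v}^n,\pmb{g}^n)$ via a coupling/data-processing argument, then control $\dtv(\tilde X_n,\eta_n)$ by a Bernoulli-to-Poisson approximation giving $\sum_{p\in\Gamma_n}1/p^2\le 2/a_n$ (the paper bounds this sum via Mertens' formula, you via a direct integral comparison; both work). One small merit of your version: by coupling the maximal-coupling pairs $\1(g_p\ge 1)$ and $N_p\sim\mathrm{Poisson}(1/p)$ independently across $p$ and applying a union bound, you obtain the process-level bound $\dtv(\tilde X_n,\eta_n)\le\sum_p 1/p^2$ directly, whereas the paper's invocation of the classical Poisson limit theorem is phrased only for the one-dimensional marginals $\tilde X_n(t)$ versus $\eta_n([0,t])$; your argument makes explicit the step needed to close the triangle inequality at the level of random measures.
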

 \begin{proof}
 Let $(\pmb{v}^n, \pmb{g}^n)$ be the optimal coupling such that $\PP[\pmb{v}^n \neq  \pmb{g}^n ]=\dtv(\pmb{v}^n, \pmb{g}^n)$. Then 
\begin{align*}
\dtv(X_n, \tilde X_n)\le \PP[ X_n \neq \tilde X_n ] = \dtv(\pmb{v}^n, \pmb{g}^n).
\end{align*}
On the other hand, we can obtain an error estimate for marginal distributions of $\tilde X_n$ and that of $\eta_n$. For any fixed $t\in [0,1]$, it follows from classical Poisson limit theorem \cite[Section 3.6]{Durrett} that 
 \begin{align*}
\dtv(\wt X_n(t), \eta_n([0,t])) &\le \sum_{p\in \cP\cap [a_n, a_n^{e^t}]} \PP[g_p\ge 1]^2 \\
&\le \max\Big\{p^{-1}: p\in \cP\cap [a_n, a_n^{e^t}] \Big\} \sum_{p\in \cP\cap [a_n, a_n^{e^t}]} \frac{1}{p} \le \frac{2}{a_n},
 \end{align*}
 where we used Mertens' formula in the last step. 
 \end{proof} 


\appendix

\section{Some elementary probabilistic estimates}

We prove a Bonferonni-type estimates. 
\begin{Lemma}\label{l:bonf_App} 
For any positive odd integer $\ga\le |\Gamma_n|$, 
\begin{align}\label{e:A3}
\sum_{\substack{I\subset \Gamma_n \\ |I|\le \ga}} (-1)^{|I|} \PP[p_{D}^m p_I | J_n] &\le \PP[{A}({D},m)]\le \sum_{\substack{I\subset \Gamma_n \\ |I|\le \ga+1}} (-1)^{|I|}  \PP[p_{D}^m p_I | J_n],\\
\label{e:A4}
\sum_{\substack{I\subset \Gamma_n \\ |I|\le \ga}}  (-1)^{|I|}  \frac{1}{p_{D}^m p_I} &\le \PP[\tilde{{A}}({D},m)]\le   \sum_{\substack{I\subset \Gamma_n \\ |I|\le \ga+1}}  (-1)^{|I|}  \frac{1}{p_{D}^m p_I}.
\end{align}
\end{Lemma}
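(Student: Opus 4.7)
The plan is to write both $A(D,m)$ and $\tilde A(D,m)$ in the canonical form $E \setminus \bigcup_{r \in \Gamma_n} F_r$ and then apply the classical alternating-sign Bonferroni inequalities to the resulting union. Concretely, I would take
$$E := \{v_p \ge m_p \text{ for all } p \in D\}, \qquad F_r := \begin{cases} \{v_r \ge m_r+1\} & r \in D, \\ \{v_r \ge 1\} & r \in D^c, \end{cases}$$
so that $A(D,m) = E \cap \bigcap_{r \in \Gamma_n} F_r^c$. The key combinatorial observation is that for every $I \subseteq \Gamma_n$, the intersection $E \cap \bigcap_{r \in I} F_r$ simultaneously imposes $v_p \ge m_p$ on $D \setminus I$, $v_p \ge m_p+1$ on $D \cap I$, and $v_p \ge 1$ on $I \cap D^c$. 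Since the index sets $D \setminus I$, $D \cap I$, $D^c \cap I$ are pairwise disjoint, multiplying the corresponding prime powers gives the single divisibility relation
$$E \cap \bigcap_{r \in I} F_r = \{\, p_D^m\, p_I \mid J_n\,\}.$$
The identical construction with the $g_p$'s in place of $v_p$ yields $\tilde A(D,m) = \tilde E \cap \bigcap_r \tilde F_r^c$, and by independence of the $g_p$'s together with \eqref{eq:probpisys} one has $\PP[\tilde E \cap \bigcap_{r \in I} \tilde F_r] = (p_D^m p_I)^{-1}$.

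With these set-theoretic identifications in hand, both \eqref{e:A3} and \eqref{e:A4} reduce to a single Bonferroni-type estimate: for arbitrary events $B_1, \dots, B_N$ and any odd $\gamma \ge 1$,
$$\sum_{|I| \le \gamma+1} (-1)^{|I|}\, \PP\Big[\bigcap_{r \in I} B_r\Big] \;\le\; \PP\Big[\bigcap_{r=1}^N B_r^c\Big] \;\le\; \sum_{|I| \le \gamma} (-1)^{|I|}\, \PP\Big[\bigcap_{r \in I} B_r\Big].$$
This follows from truncating the inclusion–exclusion expansion of $\PP[\bigcup_r B_r] = 1 - \PP[\bigcap_r B_r^c]$: odd-length partial sums overestimate the probability of the union, and even-length partial sums underestimate it, so the inequality reverses when passing to the intersection of complements. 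I would apply this to $B_r := E \cap F_r$ (and to $\tilde E \cap \tilde F_r$ for the geometric side), using $\bigcup_r(E \cap F_r) = E \cap \bigcup_r F_r$ so that $\PP[E] - \PP[\bigcup_r(E \cap F_r)] = \PP[A(D,m)]$. Substituting the divisibility identity above into each term of the truncated sum produces \eqref{e:A3}, and the parallel calculation with the $g_p$'s gives \eqref{e:A4}.

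I do not foresee a substantive obstacle. The only point requiring care is the bookkeeping of parities: the lemma pairs odd $\gamma$ with the lower bound and even $\gamma+1$ with the upper bound in both displays, and one must check that truncating at odd length in the inclusion–exclusion for $\PP[\bigcup_r B_r]$ (an overestimate) translates, after passing to complements, into a lower bound for $\PP[A(D,m)]$. Beyond that, the proof is a direct consequence of the divisibility identity $E \cap \bigcap_{r \in I} F_r = \{p_D^m p_I \mid J_n\}$ and the standard Bonferroni inequalities.
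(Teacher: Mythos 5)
Your proof takes essentially the same route as the paper: write $A(D,m)$ as $E$ minus a union of divisibility events, observe $\bigcap_{r\in I}(E\cap F_r)=\{p_D^m p_I\mid J_n\}$, and truncate inclusion--exclusion via Bonferroni; the paper's proof does precisely this, writing $\PP[A(D,m)]=\PP[p_D^m\mid J_n]-\PP\big[\bigcup_{q\in\Gamma_n}\{qp_D^m\mid J_n\}\big]$ and applying Bonferroni to the union. Your set-theoretic identity is correct.

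However, the displayed auxiliary inequality you state for $\PP\big[\bigcap_r B_r^c\big]$ has its two inequality signs reversed. The correct statement for odd $\gamma$ is
\begin{align*}
\sum_{\substack{I\\ |I|\le\gamma}} (-1)^{|I|}\,\PP\Big[\bigcap_{r\in I}B_r\Big]\ \le\ \PP\Big[\bigcap_{r=1}^{N}B_r^c\Big]\ \le\ \sum_{\substack{I\\ |I|\le\gamma+1}} (-1)^{|I|}\,\PP\Big[\bigcap_{r\in I}B_r\Big],
\end{align*}
because truncating the inclusion--exclusion expansion of $\PP[\bigcup_r B_r]$ at odd order \emph{over}estimates the union, hence \emph{under}estimates $1-\PP[\bigcup_r B_r]=\PP[\bigcap_r B_r^c]$; so the $\gamma$-truncated sum is the lower bound and the $(\gamma+1)$-truncated sum the upper bound, not the reverse as written. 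Notice that this is exactly what your own closing paragraph says verbally (``truncating at odd length\dots\ translates, after passing to complements, into a lower bound for $\PP[A(D,m)]$''), and it is what the lemma's pairing of signs requires. So the display contradicts your correct narrative and must be swapped. One more small point of bookkeeping: with $B_r:=E\cap F_r$ the $|I|=0$ term in those sums equals $1$, not $\PP[E]$, and $\PP[\bigcap_r B_r^c]=\PP[E^c]+\PP[A(D,m)]$ rather than $\PP[A(D,m)]$; the constant $\PP[E^c]$ cancels on both sides, but you should say so, or (as you also hint at, and as the paper does) work directly with $\PP[A(D,m)]=\PP[E]-\PP[\bigcup_r B_r]$ and apply Bonferroni to the union, which avoids the detour through complements altogether. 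With these corrections your argument coincides with the paper's.
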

\begin{proof} Notice that 
\begin{align*}
{A}({D},m) = \{ p_{D}^m | J_n \} \cap \Big( \big(\bigcup_{q\in\Gamma_n\setminus{D}}\{ q| J_n \}\big) \cup \big(\bigcup_{q\in{D}} \{q^{m_q+1}| J_n\}\big)\Big)^c.
\end{align*}
Hence, 
\begin{align*}
\PP[{A}({D},m) ] &= \PP[ p_{D}^m | J_n ] - \PP\big[\big(\bigcup_{q\in\Gamma_n\setminus{D}}\{ qp_{D}^m | J_n \}\big) \cup \big(\bigcup_{q\in{D}} \{q p^m_{D}| J_n\}\big) \big] \\
&= \PP[ p_{D}^m | J_n ] - \PP\big[\bigcup_{q\in\Gamma_n}\{ qp_{D}^m | J_n \}\big]. 
\end{align*}
By Bonferonni inequalities \cite[Exercise 1.6.10]{Durrett}, we have for any positive odd integer $\ga < |\Gamma_n|$ that 
\begin{align*}
 \sum_{\substack{\emptyset\neq I\subset \Gamma_n \\ |I|\le \ga+1}} (-1)^{|I|+1} \PP\big[ \bigcap_{q\in I} \{q p^m_{D} | J_n \} \big] \le  \PP\big[\bigcup_{q\in\Gamma_n}\{ qp_{D}^m | J_n \}\big] \le  \sum_{\substack{\emptyset\neq I\subset \Gamma_n \\ |I|\le \ga}} (-1)^{|I|+1} \PP\big[ \bigcap_{q\in I} \{q p^m_{D} | J_n \} \big].
\end{align*}
Observe that $\cap_{q\in I}\{ qp^m_{D} | J_n\} = \{p^m_{D} p_I | J_n\}$. Thus, 
\begin{align*}
\PP[{A}({D},m) ] &\le \PP[ p_{D}^m | J_n ] - \sum_{\substack{\emptyset\neq I\subset \Gamma_n \\ |I|\le \ga+1}} (-1)^{|I|+1} \PP\big[ \bigcap_{q\in I} \{q p^m_{D} | J_n \} \big]  =\sum_{\substack{I\subset \Gamma_n \\ |I|\le \ga+1}} (-1)^{|I|} \PP[p_{D}^m p_I | J_n],  \\ 
\PP[{A}({D},m) ] &\ge \PP[ p_{D}^m | J_n ] - \sum_{\substack{\emptyset\neq I\subset \Gamma_n \\ |I|\le \ga}} (-1)^{|I|+1} \PP\big[ \bigcap_{q\in I} \{q p^m_{D} | J_n \} \big] = \sum_{\substack{I\subset \Gamma_n \\ |I|\le \ga}} (-1)^{|I|}\PP[p_{D}^m p_I | J_n],
\end{align*}
ending the proof of \eqref{e:A3}. The same argument, with independence and the exact law of ${g_p}$, yields 
\eqref{e:A4}. We leave the details to the interested reader.  
\end{proof}

We record here the Chernoff bound for Poisson random variables.
\begin{Lemma}\label{l:chernoff}
 If $0<\lambda<x$ and $M$ is a Poisson random variable with parameter $\lambda>0$, then 
\begin{align*}
\Pb[M\geq x]
  & \leq e^{-\lambda}(e\lambda)^{x}x^{-x}.
\end{align*}
Equivalently, 
\begin{align*}
\sum_{k\geq x}\frac{\lambda^{k}}{k!}
  &\leq \Big(\frac{e\lambda}{x}\Big)^{x}.
\end{align*}
\end{Lemma}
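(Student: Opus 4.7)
The plan is to prove the first inequality via a standard Chernoff/exponential Markov argument and then derive the second as an immediate reformulation.

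First, for any $t>0$, Markov's inequality applied to $e^{tM}$ yields
\[
\Pb[M\ge x]=\Pb[e^{tM}\ge e^{tx}]\le e^{-tx}\,\E[e^{tM}].
\]
Next I would compute the moment generating function of the Poisson law: since $\E[e^{tM}]=\sum_{k\ge 0} e^{-\lambda}\lambda^k e^{tk}/k!=\exp\{\lambda(e^{t}-1)\}$, one obtains
\[
\Pb[M\ge x]\le \exp\bigl\{\lambda(e^{t}-1)-tx\bigr\}\qquad\text{for every }t>0.
\]

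The heart of the argument is then optimizing the exponent in $t$. Differentiating $\lambda(e^{t}-1)-tx$ gives critical point $e^{t}=x/\lambda$, i.e.\ $t^{\ast}=\log(x/\lambda)$, which is strictly positive exactly under the hypothesis $x>\lambda$, so $t^{\ast}$ is admissible. Substituting $t=t^\ast$ produces
\[
\Pb[M\ge x]\le \exp\bigl\{\lambda(x/\lambda-1)-x\log(x/\lambda)\bigr\}=e^{-\lambda}\bigl(e\lambda\bigr)^{x}x^{-x},
\]
which is the stated inequality.

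Finally, the equivalent tail-sum form follows instantly by writing
\[
\Pb[M\ge x]=e^{-\lambda}\sum_{k\ge x}\frac{\lambda^{k}}{k!}
\]
and multiplying both sides of the just-proved bound by $e^{\lambda}$, giving $\sum_{k\ge x}\lambda^{k}/k!\le (e\lambda/x)^{x}$. There is really no substantive obstacle here; the only subtlety worth flagging is that the hypothesis $\lambda<x$ is needed precisely to ensure that the optimizer $t^\ast=\log(x/\lambda)$ is positive, so that Markov's inequality applied to $e^{tM}$ (which requires $t>0$) is legitimate.
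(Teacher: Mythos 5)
Your proof is correct: the exponential Markov bound $\Pb[M\ge x]\le e^{-tx}\exp\{\lambda(e^{t}-1)\}$, optimized at $t^{\ast}=\log(x/\lambda)$ (admissible precisely because $\lambda<x$), gives $e^{-\lambda}(e\lambda)^{x}x^{-x}$, and the tail-sum form follows by multiplying by $e^{\lambda}$. The paper records this lemma without proof, so there is nothing to compare against; your argument is the standard Chernoff derivation one would write in, and it fills the gap cleanly.
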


\noindent{\bf Acknowledgments}. This research was supported by FNR Grant R-AGR-3410-12-Z (MISSILe) from the University of Luxembourg and partially supported by Grant R-146-000-230-114 from the National University of Singapore.

\end{document}